\definecolor{darkblue}{rgb}{0.0,0,0.7} 
\newcommand{\darkblue}{\color{darkblue}} 
\definecolor{darkred}{rgb}{0.7,0,0} 
\definecolor{lightgrey}{rgb}{0.7,0.7,0.7} 
\def\chk#1{#1^{\smash{\scalebox{.7}[1.4]{\rotatebox{90}{\guilsinglleft}}}}}
\def\m{m}
\newcommand{\op}[1]{\operatorname{#1}}
\newcommand{\Expt}[2]{\operatorname*{\mathbb{E}}\limits_{#1}(#2)}
\newcommand{\core}{{\sf core}}
\newcommand{\size}{{\sf size}}
\newcommand{\CC}{{\mathbb{C}}}
\newcommand{\V}{V}
\renewcommand{\mod}{\operatorname{mod}}
\newcommand{\gf}{\mathfrak{g}}
\newcommand{\hf}{\mathfrak{h}}
\newtheorem{theorem}{Theorem}[section]
\newtheorem{proposition}[theorem]{Proposition}
\newtheorem{corollary}[theorem]{Corollary}
\newtheorem{lemma}[theorem]{Lemma}
\theoremstyle{definition}
\newtheorem{example}[theorem]{Example}
\newcommand{\defn}[1]{\emph{\darkblue #1}}
\title[Winnie the Pooh]
  {Strange Expectations and the Winnie-the-Pooh Problem}
\author[M.~Thiel]{Marko Thiel}
\address[M.~Thiel]{Jane Street Capital, London, United Kingdom}
\email{thiel.marko@gmail.com}
\author[N.~Williams]{Nathan Williams}
\address[N.~Williams]{University of Texas at Dallas, USA}
\email{nathan.f.williams@gmail.com}
\date{\today}
\keywords{}
\subjclass[2000]{Primary 05E45; Secondary 20F55, 13F60}
\begin{document}

\begin{abstract}
Motivated by the study of simultaneous cores, we give three proofs (in varying levels of generality) that the expected norm of a weight in a highest weight representation $\V_\lambda$ of a complex simple Lie algebra
$\mathfrak{g}$ is $\frac{1}{h+1}(\lambda + 2\rho, \lambda)$.  First, we argue directly using the polynomial method and the Weyl character formula.  Second, we use the combinatorics of semistandard tableaux to obtain the result in type $A$.  Third, and most interestingly, we relate this problem to the ``Winnie-the-Pooh problem'' regarding orthogonal decompositions of Lie algebras; although this approach offers the most explanatory power, it applies only to Cartan types other than $A$ and $C$.  We conclude with computations of many combinatorial cumulants.
\end{abstract}

\keywords{Lie algebra, highest weight representation, weight lattice, expected value}

\maketitle

\section{Introduction}

The representation theory of complex simple Lie algebras is a classical source of algebraic combinatorics, intimately related to tableaux and plane partitions, symmetric functions and positivity questions, quantum groups and crystals, and the plactic monoid and RSK.

Having fixed a Cartan subalgebra $\hf$, the finite-dimensional irreducible representions of a complex simple Lie algebra $\gf$ are completely classified by the dominant weights $\lambda$ in its weight lattice $\Lambda \subset \hf^*$.  Using the symmetric bilinear form $\langle \cdot,\cdot \rangle$ on $\hf^*$ induced by the Killing form and writing $\rho$ for the half-sum of the positive roots $\Phi^+$, the Weyl dimension formula asserts that for $\lambda$ dominant, the dimension of the finite-dimensional irreducible representation $V_\lambda$
\begin{equation}
\mathrm{dim}(\V_\lambda) = \prod_{\alpha \in \Phi^+} \frac{\langle \alpha,\lambda+\rho \rangle}{\langle \alpha,\rho \rangle}.
\end{equation}

Motivated by the recent interest in statistics on simultaneous cores (see~\Cref{sec:motivation}), the purpose of this paper is to give a formula for the average norm of a weight in a highest weight representation.

\begin{theorem}
For $\gf$ a complex simple Lie algebra with $\V_\lambda$ its finite-dimensional irreducible representation of highest weight $\lambda$, the expected norm of a weight in $\V_\lambda$ is
\[\Expt{\mu \in \V_\lambda}{\langle \mu,\mu\rangle}=\frac{1}{\mathrm{dim}(\V_\lambda)} \sum_{\mu \in \V_\lambda} \mathrm{dim}(V_\lambda(\mu)) \langle \mu,\mu\rangle = \frac{1}{h+1} \big\langle \lambda, \lambda + 2\rho\big\rangle,\] where $\mathrm{dim}(V_\lambda(\mu))$ is the multiplicity of $\mu$ in $\V_\lambda$ and $h$ is the Coxeter number of $\gf$.
\label{thm:main_thm}
\end{theorem}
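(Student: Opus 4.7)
The plan is to recognize the sum $\sum_\mu \dim(V_\lambda(\mu))\langle\mu,\mu\rangle$ as the trace on $\V_\lambda$ of the Cartan part of the quadratic Casimir, and then to determine this trace by an invariance argument. Fix a $\langle\cdot,\cdot\rangle$-orthonormal basis $\{e_1,\ldots,e_r\}$ of $\hf$, where $r=\mathrm{rank}(\gf)$. Every weight vector $v_\mu\in \V_\lambda$ satisfies $e_i\cdot v_\mu = \mu(e_i)\,v_\mu$, so $\left(\sum_i e_i^2\right)\cdot v_\mu = \sum_i \mu(e_i)^2\, v_\mu = \langle\mu,\mu\rangle\, v_\mu$. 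Summing over a basis of weight vectors yields
\[
\sum_\mu \dim(V_\lambda(\mu))\,\langle\mu,\mu\rangle \;=\; \mathrm{tr}_{\V_\lambda}\!\left(\textstyle\sum_{i=1}^{r} e_i^2\right).
\]

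The key step is then to observe that the bilinear form $B(X,Y):=\mathrm{tr}_{\V_\lambda}(XY)$ on $\gf$ is $\mathrm{ad}$-invariant (by cyclicity of trace), and that since $\gf$ is simple, the space of invariant symmetric bilinear forms on $\gf$ is one-dimensional, spanned by the Killing form $\langle\cdot,\cdot\rangle$. Hence $B = c_\lambda\,\langle\cdot,\cdot\rangle$ for some scalar $c_\lambda$, and the target trace equals $c_\lambda \cdot r$. To pin down $c_\lambda$, I would compute the trace of the full quadratic Casimir $\Omega = \sum_a X_a X^a$ (for Killing-dual bases of $\gf$) in two ways: classically, $\Omega$ acts on $\V_\lambda$ as the scalar $\langle\lambda,\lambda+2\rho\rangle$, giving $\mathrm{tr}_{\V_\lambda}(\Omega) = \dim(\V_\lambda)\langle\lambda,\lambda+2\rho\rangle$; expanded summand-by-summand, $\mathrm{tr}_{\V_\lambda}(\Omega) = c_\lambda\sum_a\langle X_a,X^a\rangle = c_\lambda\,\dim(\gf)$. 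Equating the two expressions yields $c_\lambda = \dim(\V_\lambda)\,\langle\lambda,\lambda+2\rho\rangle/\dim(\gf)$.

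Assembling the pieces,
\[
\Expt{\mu\in \V_\lambda}{\langle\mu,\mu\rangle} \;=\; \frac{c_\lambda\cdot r}{\dim(\V_\lambda)} \;=\; \frac{r}{\dim(\gf)}\,\langle\lambda,\lambda+2\rho\rangle,
\]
and the identity $\dim(\gf) = r + |\Phi| = r(h+1)$, which follows from the classical fact $|\Phi| = rh$, produces the factor $\frac{1}{h+1}$. I do not anticipate a serious technical obstacle: the conceptual leap is recognizing that $\frac{1}{h+1}$ is really the ratio $r/\dim(\gf)$ in disguise, after which Schur's lemma applied to the trace form finishes the job. The drawback is that this argument proceeds through the Casimir essentially as a black box and gives no insight into the multiplicities $\dim(V_\lambda(\mu))$ themselves---presumably why the authors also pursue tableau-theoretic and orthogonal-decomposition proofs, which expose much more combinatorial structure and plausibly extend beyond this one identity to the cumulant computations advertised in the abstract.
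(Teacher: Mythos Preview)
Your proof is correct and, in fact, cleaner than any of the three proofs in the paper. It is closest in spirit to the paper's third proof, which likewise computes $\mathrm{tr}_{\V_\lambda}(\Omega)$ in two ways and identifies $\sum_\mu \mathrm{dim}(V_\lambda(\mu))\langle\mu,\mu\rangle$ with the trace of the Cartan piece $\Omega_0=\sum_j X_{0,j}^2$. But where the paper invokes an \emph{orthogonal decomposition} $\gf=\bigoplus_{i=0}^h \hf_i$ into $h+1$ pairwise Killing-orthogonal Cartan subalgebras (so that the conjugate operators $\Omega_i$ all have equal trace and sum to $\Omega$, giving $\mathrm{tr}_{\V_\lambda}(\Omega)=(h+1)\mathrm{tr}_{\V_\lambda}(\Omega_0)$), you replace this device with the one-dimensionality of the space of $\mathrm{ad}$-invariant symmetric bilinear forms on a simple $\gf$. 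This is a strict improvement: the existence of an orthogonal decomposition is a hard theorem of Kostrikin et al.\ and is not known---and is widely believed to be false---for $\mathfrak{sl}_n$ with $n$ not a prime power and for $\mathfrak{sp}_{2n}$ with $n\neq 2^m$, which is precisely why the paper's third proof explicitly excludes types $A$ and $C$. Your argument is uniform across all types. The paper's first proof (the polynomial method via the Weyl character formula) is also uniform but less direct: it first proves that $S(P,\lambda)$ is a $W$-invariant polynomial of degree $\leq \deg P$ in $\lambda+\rho$, and then determines the two unknown coefficients for $P=\|\cdot\|^2$ by evaluating at $\lambda=0$ and at the adjoint representation $\lambda=\widetilde{\alpha}$. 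What that approach buys, and yours does not, is the general polynomiality statement (their Theorem~4.1) for arbitrary $W$-invariant $P$, not just the norm.
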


\Cref{thm:main_thm} is illustrated in~\Cref{fig:example1,fig:example2} for highest weight representations for $\mathfrak{sl}_3$ and for $\mathfrak{sp}_4$.

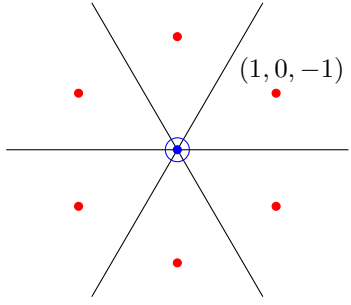
\begin{figure}[htbp]
\begin{center}
\raisebox{-0.6\height}{\begin{tikzpicture}[scale=1.5]
\draw[black,thin] (-1.5,0) -- (1.5,0);
\draw[black,thin] (.75,1.299) -- (-.75,-1.299);
\draw[black,thin] (-.75,1.299) -- (.75,-1.299);
\filldraw[red] (0,1) circle[radius=1pt];
\node at (1,.7) (a) {$(1,0,-1)$};
\filldraw[red] (0,-1) circle[radius=1pt];
\filldraw[red] (.866,.5) circle[radius=1pt];
\filldraw[red] (.866,-.5) circle[radius=1pt];
\filldraw[red] (-.866,.5) circle[radius=1pt];
\filldraw[red] (-.866,-.5) circle[radius=1pt];
\filldraw[blue] (0,0) circle[radius=1pt];
\draw[blue] (0,0) circle[radius=3pt];
\end{tikzpicture}}
\end{center}
\caption{In $\mathfrak{sl}_3$, the \textcolor{black}{eight} weights in $V_\lambda$ with $\lambda=(1,0,-1)$.  The average norm is $\frac{\textcolor{red}{6} \cdot 2 + \textcolor{blue}{2} \cdot 0}{\textcolor{black}{8}} = \frac{3}{2} = \frac{1}{3+1} \langle \lambda,\lambda+2\rho \rangle$.}
\label{fig:example1}
\end{figure}

\begin{figure}[htbp]
\begin{center}
\raisebox{-0.6\height}{\begin{tikzpicture}[scale=.8]
\draw[black,thin] (-2.5,0) -- (2.5,0);
\draw[black,thin] (0,-2.5) -- (0,2.5);
\draw[black,thin] (-1.77,-1.77) -- (1.77,1.77);
\draw[black,thin] (-1.77,1.77) -- (1.77,-1.77);
\filldraw[red] (2,1) circle[radius=1pt];
\node at (2.1,1.3) (a) {$(2,1)$};
\node at (1.2,.4) (a) {$(1,0)$};
\filldraw[red] (-2,1) circle[radius=1pt];
\filldraw[red] (-2,-1) circle[radius=1pt];
\filldraw[red] (2,-1) circle[radius=1pt];
\filldraw[red] (1,2) circle[radius=1pt];
\filldraw[red] (1,-2) circle[radius=1pt];
\filldraw[red] (-1,2) circle[radius=1pt];
\filldraw[red] (-1,-2) circle[radius=1pt];
\filldraw[blue] (1,0) circle[radius=1pt];
\filldraw[blue] (0,1) circle[radius=1pt];
\filldraw[blue] (-1,0) circle[radius=1pt];
\filldraw[blue] (0,-1) circle[radius=1pt];
\draw[blue] (0,1) circle[radius=3pt];
\draw[blue] (0,-1) circle[radius=3pt];
\draw[blue] (1,0) circle[radius=3pt];
\draw[blue] (-1,0) circle[radius=3pt];
\end{tikzpicture}}
\end{center}
\caption{In $\mathfrak{sp}_4$, the \textcolor{black}{sixteen} weights in $V_\lambda$ with $\lambda=(2,1)$.  The average norm is $\frac{\textcolor{red}{8} \cdot 5 + \textcolor{blue}{4 \cdot 2} \cdot 1}{\textcolor{black}{16}} = 3 = \frac{1}{4+1} \langle \lambda,\lambda+2\rho \rangle$.}
\label{fig:example2}
\end{figure}
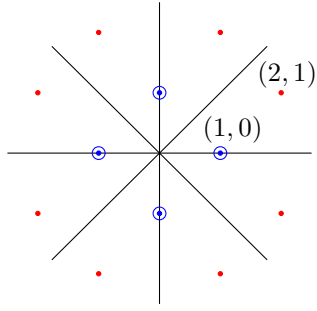

\section{Lie Algebras and their Representation Theory}
\label{sec:lie_algebras}

Recall that the complex simple Lie algebras are classified by their Dynkin diagrams, illustrated in~\Cref{fig:dynkin}.  
Fix a complex simple Lie algebra $\gf$ with Cartan subalgebra $\hf$; all Cartan subalgebras of $\gf$ are conjugate.  Given a complex representation $V:\gf\to \mathfrak{gl}(V)$, we say that the \defn{weight space} for $\mu \in \hf^*$ is the subspace \[V(\mu) = \{ v \in V : H \cdot v = \mu(H) v \text{ for all } H \in \hf\}.\]  The adjoint representation of $\gf$ has non-zero weights called \defn{roots}, and we obtain the \defn{Cartan decomposition} \begin{equation}\gf = \hf \oplus \bigoplus_{\alpha \in \Phi^+} \gf_\alpha \oplus \bigoplus_{\alpha \in \Phi^-} \gf_\alpha,\label{eq:decomp}\end{equation} where $\Phi^+$ and $\Phi^-$ are the positive and negative roots, respectively.  A \defn{simple root} is a positive root that cannot be written as the sum of two positive roots, and we write $\widetilde{\alpha}$ for the highest root.

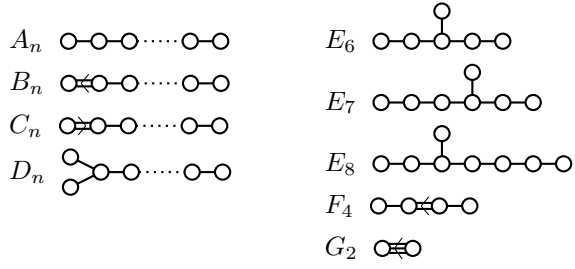
\begin{figure}[tbp]
\begin{center}
\begin{tabular}{cc}
 \parbox[t]{.3\textwidth}{
  \begin{tikzpicture}[scale=.2]
    \draw (-1,0) node[anchor=east]  {$A_n$};
    \draw[thick] (0 cm,0) -- (4 cm,0);
    \draw[dotted,thick] (4 cm,0) -- (8 cm,0);
    \draw[thick] (8 cm,0) -- (10 cm,0);
    \draw[thick,solid,fill=white] (0cm,0) circle (.5cm);
    \draw[thick,solid,fill=white] (2cm,0) circle (.5cm);
    \draw[thick,solid,fill=white] (4cm,0) circle (.5cm);
    \draw[thick,solid,fill=white] (8cm,0) circle (.5cm);
    \draw[thick,solid,fill=white] (10cm,0) circle (.5cm);
  \end{tikzpicture}
  
  \begin{tikzpicture}[scale=.2]
    \draw (-1,0) node[anchor=east]  {$B_n$};
    \draw[thick] (0 cm,-.2) -- (2 cm,-.2);
    \draw[thick] (0 cm,.2) -- (2 cm,.2);
    \draw[thin] (1.3 cm,.6) -- (.8 cm,0);
    \draw[thin] (1.3 cm,-.6) -- (.8 cm,0);
    \draw[thick] (2 cm,0) -- (4 cm,0);
    \draw[dotted,thick] (4 cm,0) -- (8 cm,0);
    \draw[thick] (8 cm,0) -- (10 cm,0);
    \draw[thick,solid,fill=white] (0cm,0) circle (.5cm);
    \draw[thick,solid,fill=white] (2cm,0) circle (.5cm);
    \draw[thick,solid,fill=white] (4cm,0) circle (.5cm);
    \draw[thick,solid,fill=white] (8cm,0) circle (.5cm);
    \draw[thick,solid,fill=white] (10cm,0) circle (.5cm);
  \end{tikzpicture}
  
  \begin{tikzpicture}[scale=.2]
    \draw (-1,0) node[anchor=east]  {$C_n$};
    \draw[thick] (0 cm,-.2) -- (2 cm,-.2);
    \draw[thick] (0 cm,.2) -- (2 cm,.2);
    \draw[thin] (.7 cm,.6) -- (1.2 cm,0);
    \draw[thin] (.7 cm,-.6) -- (1.2 cm,0);
    \draw[thick] (2 cm,0) -- (4 cm,0);
    \draw[dotted,thick] (4 cm,0) -- (8 cm,0);
    \draw[thick] (8 cm,0) -- (10 cm,0);
    \draw[thick,solid,fill=white] (0cm,0) circle (.5cm);
    \draw[thick,solid,fill=white] (2cm,0) circle (.5cm);
    \draw[thick,solid,fill=white] (4cm,0) circle (.5cm);
    \draw[thick,solid,fill=white] (8cm,0) circle (.5cm);
    \draw[thick,solid,fill=white] (10cm,0) circle (.5cm);
  \end{tikzpicture}

  \begin{tikzpicture}[scale=.2]
    \draw (-1,0) node[anchor=east]  {$D_n$};
    \draw[thick] (0 cm,-1) -- (2 cm,0);
    \draw[thick] (0 cm,1) -- (2 cm,0);
    \draw[thick] (2 cm,0) -- (4 cm,0);
    \draw[dotted,thick] (4 cm,0) -- (8 cm,0);
    \draw[thick] (8 cm,0) -- (10 cm,0);
    \draw[thick,solid,fill=white] (0cm,-1) circle (.5cm);
    \draw[thick,solid,fill=white] (0cm,1) circle (.5cm);
    \draw[thick,solid,fill=white] (2cm,0) circle (.5cm);
    \draw[thick,solid,fill=white] (4cm,0) circle (.5cm);
    \draw[thick,solid,fill=white] (8cm,0) circle (.5cm);
    \draw[thick,solid,fill=white] (10cm,0) circle (.5cm);
  \end{tikzpicture}}
  & 
  \parbox[t]{.3\textwidth}{   \begin{tikzpicture}[scale=.2]
    \draw (-1,0) node[anchor=east]  {$E_6$};
    \draw[thick] (0 cm,0) -- (8 cm,0);
    \draw[thick] (4 cm,0) -- (4 cm,2);
    \draw[thick,solid,fill=white] (0cm,0) circle (.5cm);
    \draw[thick,solid,fill=white] (2cm,0) circle (.5cm);
    \draw[thick,solid,fill=white] (4cm,0) circle (.5cm);
    \draw[thick,solid,fill=white] (6cm,0) circle (.5cm);
    \draw[thick,solid,fill=white] (8cm,0) circle (.5cm);
    \draw[thick,solid,fill=white] (4cm,2) circle (.5cm);
  \end{tikzpicture}
  
    \begin{tikzpicture}[scale=.2]
    \draw (-1,0) node[anchor=east]  {$E_7$};
    \draw[thick] (0 cm,0) -- (10 cm,0);
    \draw[thick] (6 cm,0) -- (6 cm,2);
    \draw[thick,solid,fill=white] (0cm,0) circle (.5cm);
    \draw[thick,solid,fill=white] (2cm,0) circle (.5cm);
    \draw[thick,solid,fill=white] (4cm,0) circle (.5cm);
    \draw[thick,solid,fill=white] (6cm,0) circle (.5cm);
    \draw[thick,solid,fill=white] (8cm,0) circle (.5cm);
    \draw[thick,solid,fill=white] (10cm,0) circle (.5cm);
    \draw[thick,solid,fill=white] (6cm,2) circle (.5cm);
  \end{tikzpicture}
  
      \begin{tikzpicture}[scale=.2]
    \draw (-1,0) node[anchor=east]  {$E_8$};
    \draw[thick] (0 cm,0) -- (12 cm,0);
    \draw[thick] (4 cm,0) -- (4 cm,2);
    \draw[thick,solid,fill=white] (0cm,0) circle (.5cm);
    \draw[thick,solid,fill=white] (2cm,0) circle (.5cm);
    \draw[thick,solid,fill=white] (4cm,0) circle (.5cm);
    \draw[thick,solid,fill=white] (6cm,0) circle (.5cm);
    \draw[thick,solid,fill=white] (8cm,0) circle (.5cm);
    \draw[thick,solid,fill=white] (10cm,0) circle (.5cm);
    \draw[thick,solid,fill=white] (12cm,0) circle (.5cm);
    
    \draw[thick,solid,fill=white] (4cm,2) circle (.5cm);
  \end{tikzpicture}

     \begin{tikzpicture}[scale=.2]
    \draw (-1,0) node[anchor=east]  {$F_4$};
    \draw[thick] (0 cm,0) -- (2 cm,0);
    \draw[thick] (2 cm,-.2) -- (4 cm,-.2);
    
    \draw[thin] (3.3 cm,.6) -- (2.8 cm,0);
    \draw[thin] (3.3 cm,-.6) -- (2.8 cm,0);
    \draw[thick] (2 cm,.2) -- (4 cm,.2);

    \draw[thick] (4 cm,0) -- (6 cm,0);
    \draw[thick,solid,fill=white] (0cm,0) circle (.5cm);
    \draw[thick,solid,fill=white] (2cm,0) circle (.5cm);
    \draw[thick,solid,fill=white] (4cm,0) circle (.5cm);
    \draw[thick,solid,fill=white] (6cm,0) circle (.5cm);
  \end{tikzpicture}
  
    \begin{tikzpicture}[scale=.2]
    \draw (-1,0) node[anchor=east]  {$G_2$};
    \draw[thick] (0 cm,-.3) -- (2 cm,-.3);
    \draw[thick] (0 cm,.3) -- (2 cm,.3);
    \draw[thick] (0 cm,0) -- (2 cm,0);
    \draw[thin] (1.3 cm,.7) -- (.8 cm,0);
    \draw[thin] (1.3 cm,-.7) -- (.8 cm,0);
    \draw[thick,solid,fill=white] (0cm,0) circle (.5cm);
    \draw[thick,solid,fill=white] (2cm,0) circle (.5cm);
  \end{tikzpicture}}
  \end{tabular}
  
\end{center}
\caption{The Dynkin diagrams.}
\label{fig:dynkin}
\end{figure}

The \defn{Killing form} is the nondegenerate symmetric bilinear form defined by \[B(X,Y)=\mathrm{tr}(\mathrm{ad}(X),\mathrm{ad}(Y)).\] We normalize the Killing form so that the norm of a long root is 2, and we will write this normalized form as $\langle \cdot,\cdot \rangle$.  We write $\|\alpha\|^2:=\langle \alpha,\alpha \rangle$.

Restricting the Killing form to $\hf$ and writing $\mathrm{dim}(\hf)=n$ allows us to view weights and roots as points in $\mathbb{R}^n$, and the \defn{Weyl group} of $\mathfrak{g}$ is the reflection group $W$ generated by the reflections perpendicular to the roots $\alpha \in \Phi$.  The \defn{coroot} of a root $\alpha$ is $\chk{\alpha}:=\frac{2\alpha}{\langle \alpha,\alpha \rangle}$. 

 Let $\mathcal{A}[\mathfrak{h}^*]$ be the character ring of formal linear combinations of formal exponentials of weights.  For a weight $\lambda\in\mathfrak{h}^*$, let
  \[A_{\lambda}=\sum_{w\in W}\op{sgn}(w)e^{w(\lambda)} \in \mathcal{A}[\mathfrak{h}^*].\]
  Then $A_{\lambda}$ is alternating with respect to the $W$-action on $\mathcal{A}[\mathfrak{h}^*]$, so that $w(A_{\lambda})=\op{sgn}(w)A_{\lambda},$ and it is also a $W$-alternating function of $\lambda$: $A_{w(\lambda)}=\op{sgn}(w)A_{\lambda}.$

 A weight $\lambda$ is called \defn{regular} if $w(\lambda)\neq \lambda$ for all $e \neq w \in W$---in particular, $A_{\lambda}=0$ if $\lambda$ is not regular.  It is called \defn{integral} if its inner product with every coroot is integral.  The \defn{fundamental weights} are the dual basis to the simple coroots, and a weight is called \defn{dominant} if it is a nonnegative linear combination of the fundamental weights.  Let $\rho$ denote the lowest regular dominant integral weight.

\begin{theorem}[Weyl character formula]
\label{thm:weyl}
 If $\lambda$ is an integral weight, there exists a unique $f_{\lambda}\in\mathcal{A}[\mathfrak{h}^*]$ with
 \[A_{\lambda}=f_{\lambda}A_{\rho}.\]
 Furthermore, if $\lambda$ is dominant, then
 \[f_{\lambda+\rho}=\sum_{\mu\in\mathfrak{h}^*}\mathrm{dim}(V_\lambda(\mu)) e^\mu\]
 is the character of the finite dimensional irreducible representation $\V_\lambda$, where the multiplicity of $\mu$ in $\V_\lambda$ is denoted $\mathrm{dim}(V_\lambda(\mu))$.
\end{theorem}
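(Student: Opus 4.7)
The plan is to prove the Weyl character formula in three stages: first establish existence and uniqueness of $f_\lambda$ via a divisibility argument in the character ring, then identify the relevant divisor with $A_\rho$ through the Weyl denominator identity, and finally identify $f_{\lambda+\rho}$ with the character of $\V_\lambda$ using a leading-term analysis together with a Casimir scalar argument.

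For uniqueness of $f_\lambda$, I would note that $\mathcal{A}[\mathfrak{h}^*]$ is the group algebra of a free abelian group (the weight lattice, extended if necessary to accommodate the half-roots $\alpha/2$), hence an integral domain. Since $\rho$ is regular, $A_\rho \neq 0$, so uniqueness is automatic. For existence, I would observe that $A_\lambda$ changes sign under every simple reflection $s_\alpha$, which forces $(e^{\alpha/2} - e^{-\alpha/2})$ to divide $A_\lambda$. Distinct positive roots $\alpha$ yield pairwise coprime factors, so the product $\prod_{\alpha \in \Phi^+}(e^{\alpha/2} - e^{-\alpha/2})$ divides $A_\lambda$. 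Comparing the dominance-maximal terms on the two sides of the identity identifies this product with $A_\rho$ itself---this is the Weyl denominator formula---and so exhibits $f_\lambda$ explicitly as the quotient $A_\lambda$ divided by this product.

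For the character identification, fix a dominant integral $\lambda$ and let $\chi_\lambda=\sum_\mu \mathrm{dim}(V_\lambda(\mu))\,e^\mu$ denote the character of $\V_\lambda$. Since the weight spaces of $\V_\lambda$ are permuted by $W$, $\chi_\lambda$ is $W$-invariant, so the product $\chi_\lambda A_\rho$ is $W$-alternating and therefore expands as $\sum_\nu c_\nu A_\nu$ over regular dominant weights $\nu$. The highest-weight theory gives that every weight $\mu$ of $\V_\lambda$ satisfies $\mu \leq \lambda$ in dominance order, with $\lambda$ itself appearing with multiplicity one; combined with the analogous property for the weights of $A_\rho$, the unique dominance-maximal weight of $\chi_\lambda A_\rho$ is $\lambda+\rho$, with coefficient one. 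This forces $c_{\lambda+\rho}=1$, and any other $\nu$ with $c_\nu \neq 0$ must satisfy $\nu < \lambda+\rho$ in dominance order.

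The main obstacle is ruling out these lower terms. My plan is to invoke the quadratic Casimir operator $\Omega$, which acts by the scalar $\langle \lambda,\lambda+2\rho\rangle=\|\lambda+\rho\|^2-\|\rho\|^2$ on $\V_\lambda$, and whose action on the character ring can be encoded as a second-order operator diagonalized on each $A_\nu$ with eigenvalue $\|\nu\|^2-\|\rho\|^2$. Applying this operator to both sides of $\chi_\lambda A_\rho=\sum_\nu c_\nu A_\nu$ shows that every surviving $\nu$ satisfies $\|\nu\|^2=\|\lambda+\rho\|^2$. Since $\nu$ is dominant with $\nu\leq\lambda+\rho$, the identity $\|\lambda+\rho\|^2-\|\nu\|^2=\|\lambda+\rho-\nu\|^2+2\langle\nu,\lambda+\rho-\nu\rangle$, together with the nonnegativity of $\langle\nu,\alpha\rangle$ for dominant $\nu$ and positive $\alpha$, forces $\nu=\lambda+\rho$. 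This yields $\chi_\lambda A_\rho=A_{\lambda+\rho}$ and completes the identification $f_{\lambda+\rho}=\chi_\lambda$.
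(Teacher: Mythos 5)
The paper does not prove this statement: \Cref{thm:weyl} is quoted as a classical result (the standard reference here would be \cite[Ch.~7]{goodman2009symmetry}, which the authors cite for the derivation of the dimension formula from it), so there is no in-paper proof to compare yours against. Judged on its own, your outline is the standard algebraic proof of the Weyl character formula, and its architecture is sound: the integral-domain/uniqueness observation, the divisibility of $A_\lambda$ by the pairwise coprime factors $e^{\alpha/2}-e^{-\alpha/2}$, the identification of their product with $A_\rho$ by leading-term comparison, the expansion $\chi_\lambda A_\rho=\sum_\nu c_\nu A_\nu$ with $c_{\lambda+\rho}=1$ by triangularity, and the final step showing that $\|\nu\|=\|\lambda+\rho\|$ together with $\nu\leq\lambda+\rho$ and dominance forces $\nu=\lambda+\rho$ are all correct and routine.

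The one place where the proposal asserts rather than proves is precisely where the theorem's real content sits: the claim that the Casimir action ``can be encoded as a second-order operator diagonalized on each $A_\nu$ with eigenvalue $\|\nu\|^2-\|\rho\|^2$'' \emph{and} that this same operator multiplies $\chi_\lambda A_\rho$ by $\|\lambda+\rho\|^2-\|\rho\|^2$. The natural candidate is the formal Laplacian $\Delta(e^\mu)=\|\mu\|^2 e^\mu$, which does satisfy $\Delta(A_\nu)=\|\nu\|^2A_\nu$; but the statement $\Delta(\chi_\lambda A_\rho)=\|\lambda+\rho\|^2\,\chi_\lambda A_\rho$ is not formal---it is essentially the radial-part computation for $\Omega$ conjugated by the Weyl denominator, or equivalently the identity underlying Freudenthal's formula. (Note that you cannot deduce it from $\chi_\lambda A_\rho=A_{\lambda+\rho}$, since that is what you are proving.) You need to carry out the weight-space computation of $\Omega$ on $V_\lambda(\mu)$---the off-Cartan terms $\sum_{\alpha}E_{-\alpha}E_\alpha$ contribute the correction that turns $\|\mu\|^2+2\langle\mu,\rho\rangle$ into the constant $\|\lambda+\rho\|^2-\|\rho\|^2$ after multiplication by $A_\rho$---and show that these corrections telescope against the product rule for $\Delta$. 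With that step supplied, the proof closes; without it, the argument only establishes $c_{\lambda+\rho}=1$ and leaves the lower terms unexcluded.
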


\subsection{Casimir Elements and the Universal Enveloping Algebra}
\label{sec:casimir}
The Harish-Chandra isomorphism is an isomorphism between the center of the universal enveloping algebra of $\gf$, $Z(U(\gf))$, and $W$-invariant polynomials $S(\hf)^W$.  By the Shephard-Todd-Chevelley theorem, $S(\hf)^W$ is a polynomial algebra with $n$ generators, and the degrees $d_1,d_2,\ldots,d_n$ of these generators play an important numerological role: for example, the highest degree is the \defn{Coxeter number} $h$ (the order of a Coxeter element of $W$), the dimension of the Lie algebra is $\mathrm{dim}(\mathfrak{g})=n(h+1)$, the number of reflections in $W$ is $\sum_{i=1}^n (d_i-1)$, and the number of elements in $W$ is $|W|=\prod_{i=1}^n d_i$.  See also~\Cref{sec:coxeter_cumulants} for further numerology.

We call an element of $Z(U(\gf))$ a \defn{Casimir element}---the Harish-Chandra isomorphism combined with the Shephard-Todd-Chevelley theorem shows that there are $n$ algebraically independent Casimir elements.   Special emphasis is given to the Casimir element of degree two, which may be defined as follows: fixing any basis $\{X_i\}_{i=1}^{\mathrm{dim}(\mathfrak{g})}$, we obtain a dual basis $\{X^i\}_{i=1}^{\mathrm{dim}(\mathfrak{g})}$ using the Killing form, and then define \begin{equation}\Omega = \sum_{i=1}^{\mathrm{dim}(\mathfrak{g})} X_iX^i \in Z(U(\gf)).\label{eq:casimir}\end{equation}

As representations of $\mathfrak{g}$ coincide with modules for its universal enveloping algebra, Schur's lemma implies that since $\Omega$ is in the center of $U(\gf)$, it acts as a scalar on any highest weight representation of $\mathfrak{g}$.  The following well-known theorem explicitly identifies this scalar.
\begin{theorem}
Let $\lambda$ be a dominant weight.  Then $\Omega$ acts as multiplication by $\langle\lambda,\lambda+2\rho\rangle$ on $\V_\lambda$.
\label{thm:casimir_action}
\end{theorem}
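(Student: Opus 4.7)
The plan is to exploit the freedom in choosing the basis $\{X_i\}$ defining $\Omega$, picking one adapted to the Cartan decomposition in~\eqref{eq:decomp}, and then to evaluate $\Omega$ directly on a highest weight vector, using that Schur's lemma guarantees the resulting scalar is the one acting on all of $\V_\lambda$.

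Concretely, I would first choose an orthonormal basis $\{H_i\}_{i=1}^n$ of $\hf$ with respect to $\langle\cdot,\cdot\rangle$; then, for each positive root $\alpha\in\Phi^+$, pick nonzero $E_\alpha\in\gf_\alpha$ and $F_\alpha\in\gf_{-\alpha}$ normalized so that $\langle E_\alpha,F_\alpha\rangle=1$. Because the Killing form pairs $\gf_\alpha$ with $\gf_{-\alpha}$ and vanishes on $\gf_\alpha\times\gf_\beta$ for $\beta\neq-\alpha$, the collection $\{H_i\}\cup\{E_\alpha,F_\alpha\}$ has dual basis $\{H_i\}\cup\{F_\alpha,E_\alpha\}$ under $\langle\cdot,\cdot\rangle$. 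Substituting into~\eqref{eq:casimir} gives
\[
\Omega\;=\;\sum_{i=1}^{n} H_i^{2}\;+\;\sum_{\alpha\in\Phi^+}\bigl(E_\alpha F_\alpha+F_\alpha E_\alpha\bigr).
\]

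Next I would normal-order: using $E_\alpha F_\alpha=F_\alpha E_\alpha+[E_\alpha,F_\alpha]$ together with the standard identity $[E_\alpha,F_\alpha]=T_\alpha$, where $T_\alpha\in\hf$ is the element dual to $\alpha$ under $\langle\cdot,\cdot\rangle$ (so $\mu(T_\alpha)=\langle\mu,\alpha\rangle$ for all $\mu\in\hf^*$), one rewrites
\[
\Omega\;=\;\sum_{i=1}^{n} H_i^{2}\;+\;2\sum_{\alpha\in\Phi^+}F_\alpha E_\alpha\;+\;\sum_{\alpha\in\Phi^+}T_\alpha.
\]
Since $\rho=\tfrac12\sum_{\alpha\in\Phi^+}\alpha$, the last sum equals $2T_\rho$, where $T_\rho\in\hf$ satisfies $\mu(T_\rho)=\langle\mu,\rho\rangle$.

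I would then apply $\Omega$ to a highest weight vector $v_\lambda\in\V_\lambda$. By definition $E_\alpha\cdot v_\lambda=0$ for every $\alpha\in\Phi^+$, so the middle sum kills $v_\lambda$. For the Cartan piece, $H_i\cdot v_\lambda=\lambda(H_i)v_\lambda$ and the orthonormality of $\{H_i\}$ yields $\sum_i\lambda(H_i)^2=\langle\lambda,\lambda\rangle$; similarly $T_\rho\cdot v_\lambda=\lambda(T_\rho)v_\lambda=\langle\lambda,\rho\rangle v_\lambda$. Therefore
\[
\Omega\cdot v_\lambda\;=\;\bigl(\langle\lambda,\lambda\rangle+2\langle\lambda,\rho\rangle\bigr)v_\lambda\;=\;\langle\lambda,\lambda+2\rho\rangle v_\lambda.
\]
Finally, as noted in the text just before the theorem, $\Omega\in Z(U(\gf))$ acts by a scalar on the irreducible $\V_\lambda$ by Schur's lemma, and the scalar is forced to be the one already computed on $v_\lambda$.

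The only subtle step is the bookkeeping around normalizations: one must be careful that the chosen $\{E_\alpha,F_\alpha\}$ are genuinely dual under the \emph{normalized} form $\langle\cdot,\cdot\rangle$ (not the raw Killing form $B$), so that the commutator $[E_\alpha,F_\alpha]$ produces the element $T_\alpha$ representing $\alpha$ under the same normalized form; this is what makes the sum $\sum_{\alpha\in\Phi^+}T_\alpha$ collapse cleanly to $2T_\rho$ and yields the factor of $2\rho$ rather than some rescaled version. Once these identifications are made consistently, the rest is a short computation.
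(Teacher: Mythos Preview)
Your proof is correct and is essentially the same as the paper's: both choose a basis adapted to the Cartan decomposition, normal-order the root-space terms using $E_\alpha F_\alpha = F_\alpha E_\alpha + [E_\alpha,F_\alpha]$, and evaluate on a highest weight vector where the $F_\alpha E_\alpha$ terms vanish. The only cosmetic differences are notation (your $F_\alpha, T_\alpha$ versus the paper's $E_{-\alpha}, H_\alpha$) and your explicit choice of an orthonormal basis of $\hf$ rather than a basis together with its dual.
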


\begin{proof}
Using the Cartan decomposition of~\Cref{eq:decomp}, write $\Omega = \sum_{1 \leq i \leq n} H_i H^*_i + \sum_{\alpha \in \Phi} E_\alpha E_{-\alpha}$, where we have chosen $\langle E_\alpha,E_{-\alpha}\rangle = 1$ and $[E_\alpha,E_{-\alpha}]=H_\alpha.$  In the representation $V_\lambda$, we compute \begin{align*}\Omega &= \sum_{1 \leq i \leq n} H_i H^*_i + \sum_{\alpha \in \Phi} E_\alpha E_{-\alpha} = \sum_{1 \leq i \leq n} H_i H^*_i + 2\sum_{\alpha \in \Phi^+} E_{-\alpha} E_{\alpha} +  \sum_{\alpha \in \Phi^+} [E_\alpha, E_{-\alpha}] \\ &=  \sum_{1 \leq i \leq n} H_i H^*_i + 2\sum_{\alpha \in \Phi^+} E_{-\alpha} E_{\alpha} +  \sum_{\alpha \in \Phi^+} H_\alpha. \end{align*}  Acting on a highest weight vector in $\V_\lambda$, the term $ 2\sum_{\alpha \in \Phi^+} E_{-\alpha} E_{\alpha}$ vanishes, leaving only $\sum_{1 \leq i \leq n} \lambda(H_i) \lambda(H^*_i) +  \lambda\left(\sum_{\alpha \in \Phi^+} H_\alpha\right)$.  The first term is now computed as $\langle \lambda,\lambda \rangle$, while the second term gives $\langle \lambda,2\rho\rangle$ for $\rho$ the half sum of the positive roots.
\end{proof}

\section{Motivation: Cores and Ehrhart Theory}
\label{sec:motivation}

In this section we relate a special case of~\Cref{thm:main_thm} (for the first fundamantal weight in type $A$) to the study of simultaneous core partitions.

\subsection{Simultaneous Cores and Armstrong's Conjecture}
An \defn{$a$-core} is an integer partition with no hook-length of size $a$.  The study of simultaneous $(a,b)$-cores---that is, partitions that are both $a$-cores and $b$-cores---is a topic that has recently seen quite a lot of interest from the combinatorics community~\cite{stanley2013catalan,aggarwal2014armstrong}.  When $\gcd(a,b)=1$, Anderson proved that the number of $(a,b)$-cores has the simple expression \[\left|\core(a,b)\right| = \frac{1}{a+b}\binom{a+b}{b}\] by giving a bijection to Dyck paths in an $a \times b$ rectangle~\cite{anderson2002partitions}.  It is well-known that the dominant alcoves in the affine symmetric group $\widetilde{S}_a$ are naturally indexed by $a$-cores, and in this language Anderson's result had previously been proven in the generality of affine Weyl groups by both Haiman and Suter~\cite{haiman1994conjectures,suter1998number}.

\medskip

While investigating the interpretation of $q,t$-statistics and the zeta map using the affine symmetric group~\cite{armstrong2011conjecture,armstrong2014results}, Armstrong was led to conjecture that the expected number of boxes of a simultaneous core (its ``$\size$'') had a beautiful formula.

\begin{theorem}[{Amstrong (conjectured), Johnson (proof)~\cite{johnson2015lattice}}]
\label{eq:drew_exp}
The expected number of boxes of a simultaneous core is given by
\[\Expt{\lambda \in \core(a,b)}{\size(\lambda)} = \frac{(a-1)(b-1)(a+b+1)}{24}.\]
\end{theorem}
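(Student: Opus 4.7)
The plan is to reformulate the problem using the classical bijection between $a$-cores and the coroot lattice $\check{Q}$ of type $A_{a-1}$, and then apply Ehrhart theory. Via the $a$-abacus (equivalently, via minimal-length coset representatives of $S_a$ in the affine symmetric group $\widetilde{S}_a$), an $a$-core $\lambda$ is encoded by a lattice point $x \in \check{Q}$, and the size statistic becomes a quadratic function of $x$ --- schematically, $\size(x) = \tfrac{a}{2}\langle x,x\rangle + \langle \rho, x\rangle$ in the normalized Killing form on the Cartan of $\mathfrak{sl}_a$. The additional $b$-core condition, valid when $\gcd(a,b)=1$, restricts $x$ to the set $\check{Q} \cap b\bar{A}_0$, where $\bar{A}_0$ is the closure of the fundamental alcove of $\widetilde{S}_a$; this is how Anderson's count $\tfrac{1}{a+b}\binom{a+b}{a}$ arises as an Ehrhart-type evaluation at dilation $b$.

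With this setup, the expected size reduces to computing the first and second moments of $\check{Q}\cap b\bar{A}_0$ and combining them via the formula above. Both moments are polynomials in $b$ (of degrees $a$ and $a+1$ respectively) by Ehrhart-Macdonald reciprocity, so in principle one could determine them by interpolation from small cases. A more structural approach is to exploit the cyclic rotation of the extended affine Weyl group: when $\gcd(a,b)=1$, this rotation acts freely on $\check{Q} \cap b\bar{A}_0$ with orbits of size $a+b$, each of which corresponds, under Anderson's alternative bijection to $(a,b)$-lattice paths, to the $a+b$ cyclic shifts of a single path. Averaging $\size$ over such a cyclic orbit should produce a value depending only on $a$ and $b$, collapsing the sum onto Anderson's count and making the expectation a straightforward bookkeeping calculation.

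The main obstacle is verifying that the cyclic averaging does cleanly collapse the second moment --- i.e., that the orbit variance $\langle x - \bar{x}, x - \bar{x}\rangle$ is controlled --- and that the linear term $\langle \rho, x\rangle$ combines with the quadratic term to produce the clean factor $\tfrac{(a+b+1)}{24}$. Crucially, the appearance of $(a+b+1)$, interpretable as $h+1$ in the rational/level-$b$ type $A$ setting, is the same denominator appearing in \Cref{thm:main_thm}; this is exactly what motivates viewing the present theorem as the special case of \Cref{thm:main_thm} at the first fundamental weight, and why a cleaner proof should ultimately factor through the Casimir computation of \Cref{thm:casimir_action} rather than through Anderson's bijection at all.
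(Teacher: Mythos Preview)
The paper does not actually prove this theorem. It is stated with attribution to Armstrong (conjecture) and Johnson (proof), and the text immediately after the example says explicitly that ``\Cref{eq:drew_exp} was first proven by Johnson using Ehrhart theory.'' The paper's contribution in this section is motivational: it recalls the bijection to $\check{Q}\cap b\mathcal{A}$, passes to coweights via the cyclic symmetry of the affine Dynkin diagram, and in Section~3.3 shows that the statement is essentially equivalent to the special case $\mathfrak{sl}_a(b\omega_1)$ of \Cref{thm:main_thm}.

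So there is no in-paper proof to compare against, but two remarks on your sketch are still in order. First, what you outline is indeed the shape of Johnson's argument and of the authors' extension in \cite{thiel2017strange}: the abacus bijection to $\check{Q}$, the quadratic expression for $\size$ (your schematic formula matches \Cref{def:size} up to completing the square and sign conventions), and Ehrhart polynomiality on the dilated alcove. However, your text is a plan, not a proof: you flag the central obstacle---that cyclic averaging must collapse the second moment---without resolving it. You also conflate two distinct cyclic actions: the one with orbits of size $a+b$ is the cycle-lemma action on lattice paths underlying Anderson's count, whereas the symmetry the paper invokes to pass from coroots to coweights is the order-$a$ rotation of the affine $\widetilde{A}_{a-1}$ Dynkin diagram. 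These are not the same action on $\check{Q}\cap b\bar{A}_0$, and only the latter is an isometry of the ambient Euclidean structure in the way you need.

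Second, your closing suggestion---that the factor $a+b+1$ is really $h+1$ and that the proof ``should factor through the Casimir computation''---is exactly the paper's motivational point in Section~3.3. But note that the paper's equivalence between \Cref{eq:drew_exp} and \Cref{thm:main_thm} for $\mathfrak{sl}_a(b\omega_1)$ itself requires a separate Ehrhart-type computation of the cross term $\sum_{\mu\in b\mathcal{A}}\langle c x_b, x_b\rangle$, so routing through \Cref{thm:casimir_action} does not, on its own, shorten the argument.
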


\begin{example}
We compute the expected number of boxes for the five simultaneous $(3,4)$-cores
\[ \emptyset,\hspace{1em} {\tiny \yng(1)},\hspace{1em} {\tiny \yng(2)},\hspace{1em} {\tiny \yng(1,1)},\hspace{1em} {\tiny \yng(3,1,1)},\]
as \[\frac{1}{5}\left(0+1+2+2+5\right)=2=\frac{(3-1)(4-1)(3+4+1)}{24}.\]
\end{example}

\Cref{eq:drew_exp} was first proven by Johnson using Ehrhart theory~\cite{johnson2015lattice}.  Building on Johnson's approach in \cite{thiel2017strange}, we showed that the statistic $\size$ could be interpreted as a slight modification of the natural norm on the weight space (see~\Cref{fig:weights_and_reps}), and generalized the result to all simply-laced affine Weyl groups (we now have a generalization to all affine Weyl groups):
 \begin{align}
   \size_b(x):=\frac{h}{2}\left\|x-b\frac{\chk{\rho}}{h}\right\|^2-\frac{h}{2}\left\|\frac{\chk{\rho}}{h}\right\|^2.
   \label{def:size}
  \end{align}

Briefly, by composing the bijection between $a$-cores and dominant alcoves in $\widetilde{S}_a$, and the natural bijection between dominant alcoves and coroots, one obtains a bijection between simultaneous $(a,b)$-cores and $\chk{Q} \cap b \mathcal{A}$---coroot points inside a $b$-fold dilation of the fundamental alcove in $\widetilde{S}_a$.  When $a$ is coprime to $b$, the cyclic symmetry of the affine Dynkin diagram gives rise to an affine isometry that partitions the weights inside $b \mathcal{A}$ into regular orbits, each of which contains a single coroot.  It is therefore enough to consider the \emph{coweights} in the dilation of the fundamental alcove $\chk{\Lambda} \cap b \mathcal{A}$, and one may then apply Ehrhart theory to prove (generalizations of) Armstrong's conjecture.

\subsection{Simultaneous Cores and Highest Weight Representations}
As motivation for~\Cref{thm:main_thm}, we wish to show that the problem of computing the expected number of boxes in a simultaneous $(a,b)$-core is roughly equivalent to computing the expected norm of a weight in a particular highest weight representation.  Having already related cores and $\chk{\Lambda} \cap b \mathcal{A}$, we now wish to find a relation to representations.

In $\mathfrak{sl}_a$, there is a bijection between coweights inside the $b$-fold dilation of the fundamental alcove $b\mathcal{A}$, and coweights in the highest weight representation $\mathfrak{sl}_a(b\omega_1)$, where $\omega_1$ is the first fundamental weight---indeed, both are counted by the binomial coefficient $\binom{a+b}{b}$.  This bijection may be described as follows, and is illustrated in~\Cref{fig:weights_and_reps}.  We first center $b\mathcal{A}$ around the origin by sending \[x \mapsto x- \frac{b\chk{\rho}}{h}.\]  There is a natural bijection between the coweight and coroot lattice defined by \begin{align*} \chk{\Lambda} &\to \chk{Q}\\ x &\mapsto (1-c)x,\end{align*} where $c=(a,a{-}1,\ldots,1) \in \mathfrak{S}_a$ is a long cycle.  This reflects the fact that $(1-c)$ is conjugate to the Cartan matrix, with determinant equal to the index of connection $|\chk{Q}/\chk{\Lambda}|$.

\begin{proposition}
\label{prop:bij}
The composition of these two maps gives the desired bijection \normalfont
\begin{align*}\phi: \chk{\Lambda} \cap b\mathcal{A} &\to \mathfrak{sl}_a(b\omega_1) \\ x &\mapsto (1-c)\left(x - \frac{b\chk{\rho}}{h}\right).
\end{align*}

\end{proposition}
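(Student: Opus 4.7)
The plan is a direct coordinate computation in the standard embedding $\mathfrak{h}^* \cong \{x \in \R^a : \sum_i x_i = 0\}$. First I would make both sides of the purported bijection concrete: $\chk{\Lambda} \cap b\mathcal{A}$ is the set of $x$ with integer differences, $x_1 \geq x_2 \geq \cdots \geq x_a$, and $x_1 - x_a \leq b$; while $V_{b\omega_1}$ is the $b$-th symmetric power of the standard representation, so $\mathfrak{sl}_a(b\omega_1)$ is the set of vectors of the form $(m_1 - b/a, \ldots, m_a - b/a)$ with $m_i \in \Z_{\geq 0}$ and $\sum_i m_i = b$. With these descriptions fixed, both sides have size $\binom{a+b-1}{a-1}$ and are indexed by weak compositions of $b$ into $a$ parts.

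Next I would unpack $\phi$ coordinatewise. Using $h = a$, the formula $\chk{\rho}_k = \tfrac{a+1}{2} - k$, and the fact that $1-c$ acts on $\R^a$ by $((1-c)y)_i = y_i - y_{i+1}$ (indices mod $a$), a short calculation yields
\[
\phi(x)_i + \tfrac{b}{a} =
\begin{cases}
x_i - x_{i+1}, & 1 \leq i \leq a-1, \\
b - (x_1 - x_a), & i = a.
\end{cases}
\]
This is the heart of the argument. The tuple on the right is visibly a composition of $b$, and its nonnegativity translates exactly to the two defining constraints on $\chk{\Lambda} \cap b\mathcal{A}$: dominance for the first $a-1$ entries and the alcove wall inequality $x_1 - x_a \leq b$ for the last. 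So $\phi$ lands in $\mathfrak{sl}_a(b\omega_1)$.

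Injectivity of $\phi$ follows from the invertibility of $1 - c$ on $\mathfrak{h}^*$ (a Coxeter element has no nonzero fixed vector in the reflection representation), together with the fact that translation is a bijection. Surjectivity is immediate from inverting the displayed coordinate identity: any composition $(m_1, \ldots, m_a)$ of $b$ reconstructs a unique $x$ via $x_i - x_{i+1} = m_i$ for $1 \leq i \leq a-1$ together with $\sum_i x_i = 0$, and the resulting $x$ lies in $\chk{\Lambda} \cap b\mathcal{A}$ because $x_1 - x_a = b - m_a \leq b$. The only real obstacle is bookkeeping: one has to commit to conventions for the long cycle $c$, for the half-sum $\chk{\rho}$, and for the $\sum = 0$ representatives of weights of the symmetric power, and carry them consistently through the calculation so that the clean coordinate identity above actually drops out.
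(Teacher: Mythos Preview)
Your proof is correct, but it proceeds along a different axis than the paper's. The paper argues geometrically: it observes that $b\mathcal{A}$ is the simplex on the vertices $\{0\}\cup\{b\omega_i\}$ and that the weight polytope of $V_{b\omega_1}$ is the simplex on $\{c^i(b\omega_1)\}$, then checks by a single coordinate computation that the affine map $x\mapsto(1-c)(x-b\chk{\rho}/h)$ carries the first vertex set to the second. The lattice-point statement is then inherited from the previously stated fact that $1-c$ is a bijection $\chk{\Lambda}\to\chk{Q}$. You instead unfold $\phi$ entrywise and identify $\phi(x)+\tfrac{b}{a}(1,\dots,1)$ directly with the ``gap vector'' $(x_1-x_2,\dots,x_{a-1}-x_a,\,b-(x_1-x_a))$, so that both sides become literally the same set of weak compositions of $b$ into $a$ parts.

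Your route is more elementary and has the advantage of making the inverse and the combinatorial correspondence completely explicit; it also sidesteps any appeal to convexity or to the lattice bijection $\chk{\Lambda}\to\chk{Q}$. The paper's route is shorter and more conceptual: it isolates exactly what needs to be checked (one vertex computation) and makes visible that $\phi$ is an affine isomorphism of the two simplices, not merely a bijection of their lattice points. Your caveat about conventions for $c$ and $\chk{\rho}$ is well taken---the paper's own computation is sensitive to the same choices---and your formula $((1-c)y)_i=y_i-y_{i+1}$ is consistent with the cycle $c=(a,a{-}1,\ldots,1)$ acting on coordinates in the standard way. Incidentally, your count $\binom{a+b-1}{a-1}$ for the common cardinality is the correct one.
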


\begin{proof}
The vertices of the polytope $b\mathcal{A}$ are $\{0\}\cup \{b\omega_i\}_{i=1}^{n-1}$, and $\chk{\Lambda} \cap b\mathcal{A}$ is (by definition) all coweight points inside the convex hull of those vertices.  On the other hand, the coweights in $\mathfrak{sl}_a(b\omega_1)$ are exactly those coweights in the convex hull of $\{c^i (b\omega_1)\}_{i=0}^{n-1}$ whose difference from $b\omega_1$ is in the coroot lattice.  It therefore suffices to check that the map $(1-c)\left(x - \frac{b\chk{\rho}}{h}\right)$ takes the vertices $\{0\}\cup \{\omega_i\}_{i=1}^{n-1}$ to the vertices $\{c^i (b\omega_1)\}_{i=0}^{n-1}$.  But this is a simple computation---writing $e_i$ for the usual basis of $\mathbb{R}^n$ so that $\omega_i = \sum_{j=1}^i e_j - \frac{i}{n}\sum_{j=1}^n e_j$ and $\chk{\rho}=\sum_{i=1}^{n-1} \omega_i$, we check \begin{align*}(1{-}c)\left(b\omega_i - \frac{b\chk{\rho}}{h}\right) &= \frac{b(1{-}c)}{n}\left(\sum_{j=1}^i \left(\frac{n{-}1}{2}-i+j\right)e_j-\sum_{j=i+1}^n \left(\frac{n{+}1}{2}+i-j\right)e_j\right) \\ &= \frac{b}{n} \left((n-1)e_i+\sum_{j\neq i} -e_j\right)=c^i(b \omega_1). \end{align*}
\end{proof}


\subsection{Equivalence of~\Cref{eq:drew_exp} and~\Cref{thm:main_thm} for $\mathfrak{sl}_a(b\omega_1)$}
Under the bijection of~\Cref{prop:bij}, we show that computing the expected $\size_b$ on $b\mathcal{A}$ (and hence the expected number of boxes in an $(a,b)$-core) is roughly equivalent to computing the expected norm on $\mathfrak{sl}_a(b\omega_1)$.  Write $x_b = x-b\frac{\rho}{h}$ and compute:

\begin{align*}
\Expt{\mu \in \mathfrak{sl}_a(b\omega_1)}{\langle \mu,\mu\rangle} &= \frac{1}{\binom{a+b}{b}}\sum_{\mu \in \mathfrak{sl}_a(b\omega_1)} \| x\|^2 = \frac{1}{\binom{a+b}{b}}\sum_{\mu \in b\mathcal{A}} \left\|(1-c)x_b\right\|^2\\
&=\frac{1}{\binom{a+b}{b}}\sum_{\mu \in b\mathcal{A}}\left(  2\left\|x_b\right\|^2-2\left\langle cx_b,x_b\right\rangle\right)\\
&=\frac{a^2-1}{6a}+\frac{1}{\binom{a+b}{b}}\left(\frac{4}{a}\sum_{\mu \in b\mathcal{A}}\size_b(x)-2\sum_{\mu \in b\mathcal{A}}\left\langle cx_b,x_b\right\rangle\right)
\end{align*}

It is slightly surprising, but follows from an Ehrhart-theoretic computation of roughly the same degree of difficulty as the computation of the expectation of $\size_b$ (the only difficulty arising from a repeated root), that \[\frac{1}{\binom{a+b}{b}} \sum_{\mu \in b\mathcal{A}}\left\langle cx_b,x_b\right\rangle = \frac{(a-5)(a-1)b(a+b)}{12a(a+1)},\]
so that

\begin{align*}
\Expt{\mu \in \mathfrak{sl}_a(b\omega_1)}{\langle \mu,\mu\rangle} &= \frac{a^2-1}{6a} + \frac{4}{a} \frac{(a-1)(b-1)(a+b+1)}{24} - 2 \frac{(a-5)(a-1)b(a+b)}{12a(a+1)} \\ &= \frac{(a-1)b(a+b)}{a(a+1)}.
\end{align*}

On the other hand---since the representation is multiplicity-free---we could have applied Ehrhart theory directly to compute the expected norm of a weight in $\mathfrak{sl}_a(b\omega_1)$.  We conclude that computing $\Expt{\mu \in \mathfrak{sl}_a(b\omega_1)}{\langle \mu,\mu\rangle}$ is roughly equivalent to computing the expectation of size on simultaneous $(a,b)$-cores.

\medskip
Thus, given the success of studying moments of norms of weights in $b\mathcal{A}$, we found it a reasonable extension to ask for the expected norm of a weight in a highest weight representation.  

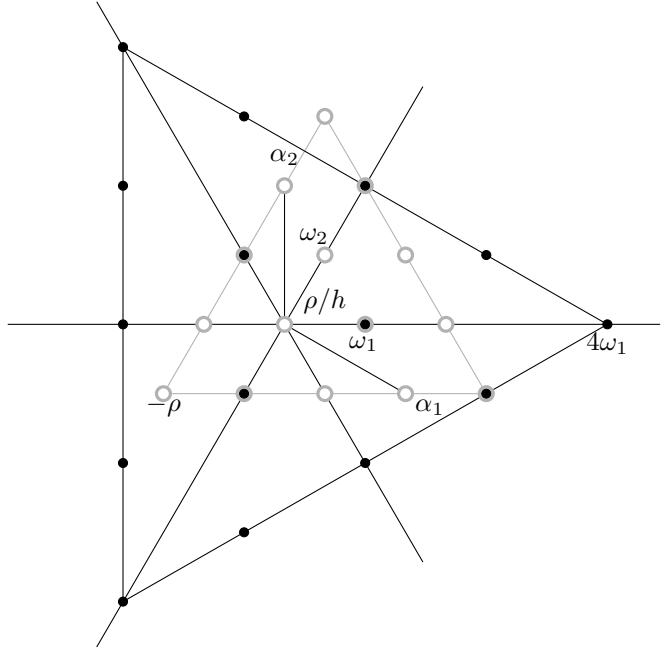
\begin{figure}[htbp]
\begin{center}
\raisebox{-0.6\height}{
\begin{tikzpicture}[scale=1.3]
\draw[->,black,thin] (0,0) -- (0,1.41421);
\draw[->,black,thin] (0,0) -- (1.2247,-.70711);
\draw[black,thin] (-2.8,0) -- (3.8,0);
\draw[black,thin] (1.4,2.4249) -- (-1.9,-3.291);
\draw[black,thin] (-1.9,3.291) -- (1.4,-2.4249);
\tikzmath{\l = -1.225+.8165*4+.40825*0;};
\tikzmath{\k = -.7071+.7071*0;};
\tikzmath{\m = -1.225+.8165*0+.40825*4;};
\tikzmath{\n = -.7071+.7071*4;};
\tikzmath{\o = -1.225+.8165*0+.40825*0;};
\tikzmath{\p = -.7071+.7071*0;};
\draw[gray!60,thin] (\l,\k) -- (\m,\n) -- (\o,\p) -- (\l,\k);
\tikzmath{\l = 3.266-1.225*0;};
\tikzmath{\k = 1.414*0-.7071*0;};
\tikzmath{\m = 3.266-1.225*4;};
\tikzmath{\n = 1.414*4-.7071*4;};
\tikzmath{\o = 3.266-1.225*4;};
\tikzmath{\p = 1.414*0-.7071*4;};
\draw[black,thin] (\l,\k) -- (\m,\n) -- (\o,\p) -- (\l,\k);

   	\foreach \i in {0,...,4}{
      \tikzmath{\ii=4-\i;};
	  \foreach \j in {0,...,\ii}{
		\tikzmath{\l = -1.225+.8165*\i+.40825*\j;};
        \tikzmath{\k = -.7071+.7071*\j;};
        \filldraw[color=gray!60, fill=white, very thick] (\l,\k) circle[radius=2pt];
      };
	};
 	\foreach \i in {0,...,4}{
	  \foreach \j in {\i,...,4}{
		\tikzmath{\l = 3.266-1.225*\j;};
        \tikzmath{\k = 1.414*\i-.7071*\j;};
        \filldraw (\l,\k) circle[radius=1.3pt];
      };
	};
\node at (-1.225,-.83) (a) {$-\rho$};
\node at (.40825,.2357) (b) {$\rho/h$};
\node at (.8,-.2) (c) {$\omega_1$};
\node at (.3,.9) (d) {$\omega_2$};
\node at (3.266,-.2) (a) {$4\omega_1$};
\node[black] at (1.47,-.85) (b) {$\alpha_1$};
\node[black] at (0,1.7) (c) {$\alpha_2$};
\end{tikzpicture}}
\end{center}
\caption{The weights inside a $4$-fold dilation of the fundamental alcove in $\mathfrak{sl}_3$ are drawn as gray circles inside the gray triangle, while the weights in the representation $V_{4\omega_1}$ are drawn as black disks.  The statistic $\size$ on the weights in the dilation of the fundamental alcove is a quadratic form that is a slight modification of the norm.}
\label{fig:weights_and_reps}
\end{figure}

\section{Proof of~\Cref{thm:main_thm} using the Weyl Character Formula}

In this section we use the polynomial method and the Weyl character formula (\Cref{thm:weyl}) to give an elementary, uniform proof of~\Cref{thm:main_thm} in all types.  We first prove polynomiality of sums of polynomial functions in the weights in $\V_\lambda$. 


\begin{theorem}\label{avgpoly}
 Suppose that $\lambda$ is a dominant integral weight of $\mathfrak{h}$ and let $\V_{\lambda}$ be the finite dimensional irreducible representation of $\mathfrak{g}$ of highest weight $\lambda$.
 For a weight $\mu$, let $\mathrm{dim}(\V_\lambda(\mu))$ be the multiplicity of $\mu$ in $\V_\lambda$.
 Let $P\in S(\mathfrak{h})^W$ be a $W$-invariant polynomial on $\mathfrak{h}^*$ of degree $d$. Then
 \[S(P,\lambda):=\frac{1}{\op{dim}(\V_\lambda)}\sum_{\mu\in\mathfrak{h}^*}\mathrm{dim}(\V_\lambda(\mu))P(\mu)\]
 is a polynomial in $\lambda$ of at most degree $d$. It is a $W$-invariant polynomial in variables given by $\lambda+\rho$.
\end{theorem}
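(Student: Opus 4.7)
The plan is to combine the Weyl character formula (\Cref{thm:weyl}) with the polynomial method, analyzing the Taylor expansion of the character of $V_\lambda$ at the identity. Identifying $S(\mathfrak{h})$ with polynomial functions on $\mathfrak{h}^*$, the $W$-invariant polynomial $P$ gives rise to a constant-coefficient differential operator $P(\partial_H)$ on functions of $H\in\mathfrak{h}$ satisfying $P(\partial_H) e^{\mu(H)} = P(\mu) e^{\mu(H)}$. Setting $\chi_\lambda(H) := f_{\lambda+\rho}(H) = \sum_\mu \mathrm{dim}(V_\lambda(\mu)) e^{\mu(H)}$ and evaluating at $H=0$ yields
\[\mathrm{dim}(V_\lambda) \cdot S(P, \lambda) = \sum_\mu \mathrm{dim}(V_\lambda(\mu)) P(\mu) = P(\partial_H) \chi_\lambda(H)\big|_{H=0},\]
so by the Weyl character formula $\chi_\lambda(H) = A_{\lambda+\rho}(H)/A_\rho(H)$ we are reduced to studying the $H$-Taylor expansion of this ratio.

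The key step is a factorization. Since $A_\nu(H) = \sum_w \op{sgn}(w) e^{(w\nu)(H)}$ is anti-invariant under $W$ acting on $\nu$ (as a parameter) and on $H$ (as an argument), it vanishes on every root hyperplane in each variable, so as a holomorphic function of $(\nu, H) \in \mathfrak{h}^*\times\mathfrak{h}$ it factors as
\[A_\nu(H) = \prod_{\alpha>0}\langle\alpha,\nu\rangle \cdot \prod_{\alpha>0}\alpha(H)\cdot \Psi(\nu, H)\]
with $\Psi$ holomorphic and $W$-invariant in each of $\nu$ and $H$ separately. Expanding $A_\nu(H) = \sum_k \pi_k(\nu, H)/k!$, where $\pi_k(\nu, H) := \sum_w \op{sgn}(w)(w\nu(H))^k$ is bi-homogeneous of bi-degree $(k, k)$ in $(\nu, H)$ (vanishing for $k < |\Phi^+|$), shows that the degree-$j$ part of $\Psi(\nu, H)$ in $H$ is a homogeneous polynomial in $\nu$ of degree $j$. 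Matching the leading term against $A_\rho(H) = \prod_{\alpha>0} 2\sinh(\alpha(H)/2)$ gives the constant value $\Psi(\nu, 0) = 1/\prod_{\alpha>0}\langle\alpha,\rho\rangle$, independent of $\nu$.

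Plugging this into Weyl's formula and applying the Weyl dimension formula yields
\[\chi_\lambda(H) = \mathrm{dim}(V_\lambda) \cdot F(H), \qquad F(H) := \frac{\Psi(\lambda+\rho,H)}{\Psi(\rho,H)},\]
where $F$ is holomorphic at $0$ with $F(0) = 1$, and $W$-invariant in each of $H$ and $\lambda+\rho$ separately. Expanding $F(H) = \sum_k F_k(H)$ as its Taylor series and using the bi-degree observation (together with the $\lambda$-independence of $1/\Psi(\rho, H)$), each $F_k$ is a $W$-invariant polynomial of degree $k$ in $H$ whose coefficients are $W$-invariant polynomials in $\lambda+\rho$ of degree at most $k$. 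Since $P(\partial_H)$ has order $d$, only $F_d$ contributes, and $S(P,\lambda) = P(\partial_H)F(H)\big|_{H=0} = P(\partial_H)F_d$ is a $W$-invariant polynomial in $\lambda+\rho$ of degree at most $d$. The main obstacle is the factorization step: one needs to justify that $W$-anti-invariant holomorphic functions are divisible by $\prod_{\alpha>0}\alpha$ in each variable separately and that $\Psi$ is jointly analytic and doubly $W$-invariant; once this is set up, the degree count follows by routine bookkeeping with bi-homogeneity.
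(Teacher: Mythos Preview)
Your argument is correct and is conceptually the same as the paper's: both apply the constant-coefficient differential operator realizing $P$ to the Weyl character $A_{\lambda+\rho}/A_\rho$, evaluate at the origin, and use anti-invariance to strip off the Weyl-denominator factor $\prod_{\alpha>0}\langle\alpha,\lambda+\rho\rangle$, leaving a $W$-invariant polynomial of degree at most $d$ in $\lambda+\rho$. The execution differs in a useful way. The paper differentiates the quotient directly via the product/quotient rule, obtaining a sum over exponent vectors $\mathbf{m}$, and then invokes L'H\^opital termwise to see that $\epsilon(N(f_\lambda))$ is a polynomial of degree $\le |\Phi^+|+d$, alternating in $\lambda$, hence divisible by $\prod_{\alpha>0}\langle\alpha,\lambda\rangle$. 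You instead pre-factor $A_\nu(H)$ symmetrically in both variables using the termwise bi-homogeneous expansion $\pi_k(\nu,H)=\sum_w\op{sgn}(w)(w\nu(H))^k$; since each $\pi_k$ is anti-invariant in each variable, it is divisible by both discriminants in the polynomial ring, and the bi-degree bookkeeping then replaces L'H\^opital entirely. This makes the $W$-invariance in $\lambda+\rho$ and the degree bound more transparent, at the cost of the extra (but routine) justification that $\Psi$ is jointly analytic with $\Psi(\cdot,0)$ constant. One small correction: your claim that ``only $F_d$ contributes'' assumes $P$ is homogeneous of degree $d$; for general $P$ of degree $d$, each homogeneous component $P_j$ picks out $F_j$, contributing a term of degree $\le j\le d$ in $\lambda+\rho$, so the conclusion is unchanged.
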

\begin{proof}
 We follow the ideas and notation of the derivation of the Weyl dimension formula from the Weyl character formula in \cite[Section 7.1.2]{goodman2009symmetry}.
 
We write $f_{\lambda+\rho}=\frac{A_{\lambda+\rho}}{A_{\rho}}$.
We define the linear differential operator \begin{align*}N:\mathcal{A}[\mathfrak{h}^*]&\rightarrow \mathcal{A}[\mathfrak{h}^*] \\ N(e^{\lambda})&=P(\lambda)e^\lambda,\end{align*}
and the linear evaluation map \begin{align*}\epsilon:\mathcal{A}[\mathfrak{h}^*]&\rightarrow\mathbb{C}\\
\epsilon(e^\lambda)&=1.\end{align*}
Then we have that
\[\frac{\epsilon(N(f_{\lambda+\rho}))}{\epsilon(f_{\lambda+\rho})}=\frac{\sum_{\mu\in\mathfrak{h}^*}\mathrm{dim}(\V_\lambda(\mu))P(\mu)}{\sum_{\mu\in\mathfrak{h}^*}\mathrm{dim}(\V_\lambda(\mu))}=S(P,\lambda).\]
Write $A_\rho=\prod_{\alpha\in\Phi^+}(e^{\alpha/2}-e^{-\alpha/2})$. Now $N$ is a differential operator of degree $d$, so the quotient rule implies that
\begin{align*}
 N(f_\lambda)&=N\left(\frac{A_\lambda}{A_\rho}\right)=N\left(\frac{A_\lambda}{\prod_{\alpha\in\Phi^+}(e^{\alpha/2}-e^{-\alpha/2})}\right)\\
 &=\sum \frac{B_{\lambda,\mathbf{m}}}{\prod_{\alpha\in\Phi^+}(e^{\alpha/2}-e^{-\alpha/2})^{m_\alpha+1}},
\end{align*}
where the sum is over all $\mathbf{m}=(m_\alpha)\in[d]^{\Phi^+}$ such that $\sum_{\alpha\in\Phi^+}m_\alpha\leq d$, and where $B_{\lambda,\mathbf{m}}\in\mathcal{A}[\mathfrak{h}^*]$ has coefficients that are polynomials in $\lambda$ of degree at most $d-\sum_{\alpha\in\Phi^+}m_\alpha$.
Using L'H\^{o}pital's rule we see that $\epsilon(N(f_\lambda))$ is a polynomial in $\lambda$ of degree at most $|\Phi^+|+d$. It is alternating in $\lambda$.
In particular, $\epsilon(N(f_\lambda))=0$ if $\lambda$ is not regular. So for every $\alpha\in\Phi^+$ the linear factor $\langle\lambda,\alpha\rangle$ divides the polynomial $\epsilon(N(f_\lambda))$.

Consider the special case where $P=1$, so that $N$ is the identity and $d=0$. Then this implies that $\epsilon(N(f_\lambda))=\epsilon(f_\lambda)=C\prod_{\alpha\in\Phi^+}\langle\lambda,\alpha\rangle$ for a constant $C$.
We have that $f_\rho=e^0$ so that $C=\frac{1}{\prod_{\alpha\in\Phi^+}\langle\rho,\alpha\rangle}$.

For any $W$-invariant polynomial $P\in S(\mathfrak{h})^W$, the polynomial $\epsilon(f_\lambda)$ therefore divides the polynomial $\epsilon(N(f_\lambda))$, so that the quotient $\frac{\epsilon(N(f_\lambda))}{\epsilon(f_\lambda)}$ is a polynomial in $\lambda$ of degree at most $d$.
It is $W$-invariant, since both $\epsilon(N(f_\lambda))$ and $\epsilon(f_\lambda)$ are alternating in $\lambda$.
Thus $S(\lambda,P)=\frac{\epsilon(N(f_{\lambda+\rho}))}{\epsilon(f_{\lambda+\rho})}$ is given by a $W$-invariant polynomial of degree at most $d$ in $\lambda+\rho$.
\end{proof}

\begin{example}
Consider $\mathfrak{g}=\mathfrak{sl}_2$ and $P=\|\cdot\|^2$, where $\|\cdot \|^2$ is given by the $\mathfrak{S}_2$-equivariant polynomial $\|(x,-x)\|^2 = 2x^2$.  Since the dominant weights are given by $\lambda = (m,-m)$ for $m \in \mathbb{N}$, and since the weights in $V_\lambda$ are exactly of the form $(m,-m),(m-2,-m+2),\ldots,(-m,m)$ (with no multiplicity), we compute that \begin{align*}S(\|\cdot \|^2,\lambda) = \frac{2}{m+1} \sum_{i=0}^m (m-2i)^2 = \frac{2}{3} m(m+2) &= \frac{1}{3}\left(2(m+1)^2-2\right),\\ &=\frac{1}{h+1}(\|\lambda+\rho\|^2-\|\rho\|^2), \end{align*}  which is $\mathfrak{S}_2$-equivariant as a polynomial in $m+1$.
\end{example}

 \begin{proof}[First proof of~\Cref{thm:main_thm}]  For this proof, we use the normalization of the Killing form that $\|\widetilde{\alpha} \|^2=\frac{1}{g}$, where $g$ is the dual Coxeter number.
  By Theorem \ref{avgpoly}, $S(\|\cdot\|,\lambda)$ is a $W$-invariant polynomial of degree at most $2$ in $\lambda+\rho$ so that $S(\|\cdot\|^2,\lambda)=a+b\|\lambda+\rho\|^2$ for some $a,b\in\CC$.
  We have that $S(\|\cdot\|^2,0)=0$, so $a=-b\|\rho\|^2$.
  Furthermore, if $\widetilde{\alpha}\in\Phi^+$ is the highest root, then $V_{\widetilde{\alpha}}$ is the adjoint representation of $\mathfrak{g}$, so we get
  \[S(\|\cdot\|^2,\widetilde{\alpha})=\frac{1}{n(h+1)}\sum_{\alpha\in\Phi}\|\alpha\|^2=\frac{1}{h+1}\]
  using $\sum_{\alpha\in\Phi}\|\alpha\|^2=n$ for the Killing form \cite{brown1964remark}.
  So $\frac{1}{h+1}=b(\|\widetilde{\alpha}+\rho\|^2-\|\rho\|^2)=b$, using that $\|\widetilde{\alpha}+\rho\|^2-\|\rho\|^2$ is the Casimir eigenvalue on the adjoint representation and therefore equals $1$.
  We conclude that \[S(\|\cdot\|^2,\lambda)=\frac{1}{h+1}(\|\lambda+\rho\|^2-\|\rho\|^2).\qedhere\]
 \end{proof}


\section{A Combinatorial Proof in Type $A$}
\label{sec:typea}

In this section, we make the polynomiality argument of the previous section more concrete using the combinatorics of the representation theory of $\mathfrak{sl}_n$.  Fix $\mathfrak{g}=\mathfrak{sl}_{n}$ and $\omega_i=\sum_{j=1}^i e_i$.  In $\mathfrak{sl}_n$, dominant weights of $\mathfrak{h}$ may be parametrized as integer partitions \[\lambda=[\lambda_1 \geq \lambda_2 \geq \cdots \geq \lambda_n] \vdash m,\] where parts $\lambda_i$ may be equal to zero.  Fix a highest weight $\lambda$, and write
\begin{align*}
	\overline{\lambda_i} = \lambda_i - \frac{|\lambda|}{n}, \text{ } \overline{\lambda}=\left[\overline{\lambda_1}\geq \cdots \geq \overline{\lambda_n}\right] \text{, and }
	\rho = \left[n-1,n-2,\ldots,1,0\right].
\end{align*}

With these conventions, weights $\mu$ in the highest weight representation $\mathfrak{sl}_{n}(\lambda)$ may be thought of as certain points in $\mathbb{R}^n$ with positive entries and sum equal to $m$.  Combinatorially, the multiplicity of $\mu$ in $\mathfrak{sl}_{n}(\lambda)$ is well-known to be given by the number of semistandard tableaux of shape $\lambda$ on the alphabet $[n]$ with content $\mu$; $m$ is just the number of boxes in the Ferrers shape $\lambda$:
\[\mathrm{ch}\left(\mathfrak{sl}_{n,\lambda}\right) = s_\lambda(x_1,\ldots,x_n)= \sum_{\substack{T \text{ semistandard} \\ \text{ of shape }\lambda}} \mathbf{x}^T,\]
where $\mathbf{x}^T = \prod_{i=1}^n x_i^{\left|\{i \in T\}\right|}$ and $s_\lambda(x_1,\ldots,x_n)$ is a Schur polynomial.  As a simple consequence of this combinatorial description of the character, we have Weyl's ``interlacing'' multiplicity-free formula for the branching of the representation $\mathfrak{sl}_{n,\lambda}$ to $\mathfrak{sl}_{n-1}$:\[\mathfrak{sl}_{n,\lambda} = \bigoplus_{\mu} \mathfrak{sl}_{n-1,\mu}, \text{ where } \lambda_1 \geq \mu_1 \geq \lambda_2 \geq \cdots \geq \mu_{n-1} \geq \lambda_n.\]  By symmetry of the Schur function, and since this branching rule exactly peels off the boxes containing the entry $n$ (corresponding to the value of the coordinate $x_n$), one could imagine using this formula to determine the norm by computing
\[\sum_{\mu \in V_\lambda} \mathrm{dim}\left(\mathfrak{sl}_{n,\lambda}(\mu)\right) \langle \mu,\mu\rangle = n\sum_{\mu} \mathrm{dim}\left(\mathfrak{sl}_{n-1,\mu}\right)\left(|\lambda|-|\mu|\right)^2.\]
We do not follow this approach here, but instead isolate the boxes containing the entry $n$ using the Pieri rule and an inclusion-exclusion argument.


\begin{theorem}
Let $\mathfrak{g}=\mathfrak{sl}_{n}$.  Suppose that $\lambda$ is a dominant weight of $\mathfrak{h}$ and let $\mathfrak{sl}_{n,\lambda}$ be the finite dimensional irreducible representation of $\mathfrak{sl}_n$ of highest weight $\lambda$. Then
\[\frac{1}{\mathrm{dim}(\mathfrak{sl}_{n,\lambda})} \sum_{\mu \in \mathfrak{sl}_{n}(\lambda)} \mathrm{dim}(\mathfrak{sl}_{n,\lambda}(\mu)) \langle \overline{\mu},\overline{\mu}\rangle = \frac{1}{h+1} \big\langle \overline{\lambda}, \overline{\lambda + 2\rho}\big\rangle.\]
\end{theorem}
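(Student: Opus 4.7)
The plan is to exploit the Schur-polynomial description of the character of $\mathfrak{sl}_{n,\lambda}$, together with the $S_n$-symmetry of weight multiplicities and the reverse Pieri rule, to reduce the computation of the expected squared norm to a manageable combinatorial sum.

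First, since $\mathrm{dim}(\mathfrak{sl}_{n,\lambda}(\mu)) = K_{\lambda\mu}$ is the Kostka number and the character $s_\lambda(x_1,\ldots,x_n)$ is $S_n$-invariant, the quantity $\sum_\mu K_{\lambda\mu} \mu_i^2$ is independent of $i$. Combining this with $\sum_i \overline{\mu_i} = 0$ and $\|\overline\mu\|^2 = \sum_i \mu_i^2 - |\lambda|^2/n$, we obtain
\[
\sum_\mu K_{\lambda\mu} \|\overline\mu\|^2 \;=\; n \sum_T \mu_n(T)^2 \;-\; \frac{|\lambda|^2}{n}\,\mathrm{dim}(\mathfrak{sl}_{n,\lambda}),
\]
where $\mu_n(T)$ counts the boxes of a semistandard tableau $T$ of shape $\lambda$ on $[n]$ that are labelled $n$. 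Hence the problem reduces to computing the second moment $\sum_T \mu_n(T)^2$ of the ``number of top-alphabet entries'' over uniform SSYT of shape $\lambda$.

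Next, I decompose $\mu_n^2 = \mu_n + 2\binom{\mu_n}{2}$. The reverse Pieri rule implies that deleting the $n$-labelled boxes of $T$ yields a bijection with pairs $(\kappa, T')$, where $\kappa \subseteq \lambda$ has $\lambda/\kappa$ a horizontal strip and $T'$ is a semistandard tableau of shape $\kappa$ on $[n-1]$. Applying this bijection box-by-box gives
\[
\sum_T \mu_n(T)^2 \;=\; \sum_{\substack{\kappa \subseteq \lambda \\ \lambda/\kappa \text{ horizontal strip}}} \Bigl(|\lambda/\kappa| + 2\tbinom{|\lambda/\kappa|}{2}\Bigr) \mathrm{dim}(\mathfrak{sl}_{n-1,\kappa}),
\]
where the linear term counts marked-cell pairs $(T,b)$ with $b$ an $n$-box, and the quadratic term counts unordered pairs of distinct $n$-boxes; the shift between them is where inclusion-exclusion controls the coincident-box case.

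Finally, there are two natural ways to close the identity. Option A is to evaluate the horizontal-strip sums directly, using the hook-content formula for $\mathrm{dim}(\mathfrak{sl}_{n-1,\kappa})$ together with induction on $n$. Option B is to invoke \Cref{avgpoly}, which already guarantees that $S(\|\cdot\|^2,\lambda)$ is a $W$-invariant polynomial of degree at most two in $\lambda+\rho$; matching it against the proposed closed form then requires only checking two test families of $\lambda$ (for example $\lambda = k\omega_1$ for $k \in \mathbb{N}$, and $\lambda = \widetilde\alpha = \omega_1 + \omega_{n-1}$, where the latter recovers the Casimir eigenvalue on the adjoint representation as in the uniform proof). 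The main obstacle is the combinatorial evaluation of the horizontal-strip sums in Option A, which requires careful bookkeeping with hook contents; Option B is cleaner but sacrifices the purely combinatorial flavor that Section~\ref{sec:typea} advertises, so I would pursue Option A and verify the final equality against the Coxeter number $h=n$ by comparing coefficients of $\langle \overline\lambda, \overline{\lambda+2\rho}\rangle$.
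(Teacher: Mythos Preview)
Your reduction to $\sum_T \mu_n(T)^2$ via $S_n$-symmetry is correct and is exactly how the paper begins. But from that point on your proposal diverges from the paper and, more importantly, leaves the hard step undone.

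First, a minor point: your decomposition $\mu_n^2 = \mu_n + 2\binom{\mu_n}{2}$ is cosmetic. When you sum both terms over the same horizontal strips $\kappa$ you simply recover $|\lambda/\kappa|^2$, so the ``inclusion-exclusion for the coincident-box case'' is not doing any real work. What you have written is precisely the branching formula
\[
\sum_T \mu_n(T)^2 \;=\; \sum_{\substack{\kappa \subseteq \lambda\\ \lambda/\kappa\text{ horiz.\ strip}}} |\lambda/\kappa|^2\,\dim(\mathfrak{sl}_{n-1,\kappa}),
\]
which the paper explicitly records and then says ``We do not follow this approach here.'' The reason is that this sum has a $\lambda$-dependent number of terms and does not obviously evaluate in closed form; your Option~A (``evaluate directly using the hook-content formula together with induction on $n$'') names the obstacle without overcoming it.

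The paper's actual combinatorial step is a different inclusion-exclusion: it introduces the $n$ shifted partitions $\lambda^{(j)}=[\lambda_1{+}1,\ldots,\lambda_{j-1}{+}1,\lambda_{j+1},\ldots,\lambda_n]$ and rewrites the count of tableaux with a prescribed number of $n$'s as an \emph{alternating} sum of $n$ products $s_{\lambda^{(j)}}([1]^{n-1})\cdot h_i([1]^{n-1})$. This replaces the horizontal-strip sum by a sum with exactly $n$ terms, each explicitly evaluable. After simplification both sides become symmetric functions of degree~$2$ in the variables $x_j=\lambda_j-j$, and the proof finishes by matching coefficients via well-chosen specializations. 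That passage from the variable-length branching sum to a fixed-length alternating sum is the missing idea in your Option~A.

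Your Option~B is logically sound but, as you note, it is just the paper's first (uniform) proof restricted to type~$A$; it does not supply the purely combinatorial argument that this section is meant to give.
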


\begin{proof}[Second proof of~\Cref{thm:main_thm}, valid in type $A$]
	Some care is needed when we compute the length of $\mu \in \mathfrak{sl}_{n}(\lambda)$---we wish to compute the length of the \emph{normalized} weight $\overline{\mu}$.
Of course, there is a simple relationship between the length of $\mu$ and of $\overline{\mu}$: $\langle \overline{\mu},\overline{\mu} \rangle = \langle \mu,\mu\rangle- \frac{m^2}{n},$ where $m=\langle \mu, [1]^n \rangle$ (constant for all $\mu \in \mathfrak{sl}_{n}(\lambda)$).  We may therefore compute with unnormalized weights using the relationship
\begin{align*}\frac{1}{\mathrm{dim}(\mathfrak{sl}_{n,\lambda})} \sum_{\mu \in \mathfrak{sl}_{n,\lambda}} \mathrm{dim}(\mathfrak{sl}_{n,\lambda}(\mu)) \langle \overline{\mu},\overline{\mu}\rangle =& -\frac{m^2}{n}\\&+\frac{1}{\mathrm{dim}(\mathfrak{sl}_{n,\lambda})} \sum_{\mu \in \mathfrak{sl}_{n,\lambda}} \mathrm{dim}(\mathfrak{sl}_{n,\lambda}(\mu)) \langle \mu,\mu\rangle.\end{align*} 

Define $n$ new partitions
	\[\lambda^{(i)} = \left[\lambda_1+1 \geq \lambda_2+1 \geq \cdots \geq \lambda_{i-1}+1 \geq \lambda_{i+1} \geq \cdots \geq \lambda_n\right] \text{ for } 1 \leq i \leq n.\]
Using the fact that Schur polynomials in the variables $x_i$ are symmetric, conditioning on which boxes of $\lambda$ contain the entry $n$, and using the Pieri rule allows us to write

\begin{multline*}\frac{1}{\mathrm{dim}(\mathfrak{sl}_{n,\lambda})}\sum_{\mu \in \mathfrak{sl}_{n,\lambda}} \mathrm{dim}(\mathfrak{sl}_{n,\lambda}(\mu)) \langle \overline{\mu},\overline{\mu}\rangle=
	-\frac{m^2}{n}+\\+\overbrace{\frac{n}{\mathrm{dim}(\mathfrak{sl}_{n,\lambda})}}^{\substack{\text{Schur polynomial}\\ \text{symmetry}}} \sum_{j=1}^n \overbrace{(-1)^{j+1}}^{\substack{\text{inclusion-}\\\text{exclusion}}} \Big( \underbrace{s_{\lambda^{(j)}}([1]^{n-1}) \sum_{i=0}^{\lambda_j-(j-1)} h_i([1]^{n-1})}_{\text{Pieri rule; leftover boxes contain $n$}} \underbrace{\left(\lambda_j-(j-1)-i\right)^2}_{\substack{\text{contribution to norm}\\ \text{ of boxes containing $n$}}} \Big),
\end{multline*}
where the alternating sum reflects an inclusion-exclusion argument that removes the over-count of those partitions that aren't contained in $\lambda$.  The point is that the numerator has now been expressed as a polynomial.

\begin{example}
As in \Cref{fig:example1}, let $\lambda = (2,1)$ and $n=3$.   We consider all eight semistandard tableaux of shape $(2,1)$ with entries at most 3: \[ \begin{tabular}{c}\young(13,3)\\[\smallskipamount] \young(23,3)\end{tabular} \hspace{3em} \begin{tabular}{c} \young(11,3) \\[\smallskipamount] \young(12,3) \\[\smallskipamount] \young(22,3) \\[\medskipamount] \young(13,2)  \end{tabular} \hspace{3em} \begin{tabular}{c}\young(11,2) \\[\smallskipamount] \young(12,2) \end{tabular}.\]  
By symmetry of the Schur function, \[\sum_T \sum_{i=1}^3 \left(\text{number of $i$s in T}\right)^2 = 3\sum_T \left(\text{number of $3$s in T}\right)^2,\]
and so we should group the tableaux by the number of boxes containing the entry $n=3$.  The sum above uses the Pieri rule to do this, expressing these eight tableaux as

\begin{center}
\begin{tabular}{c}\young(1\bullet,\bullet)\\[\smallskipamount] \young(2\bullet,\bullet) \\[\smallskipamount] \hline $s_1 \cdot h_0 \cdot 2^2$ \end{tabular} \hspace{1.5em}+\hspace{1.5em} \begin{tabular}{c} \young(11,\bullet) \\[\smallskipamount] \young(12,\bullet) \\[\smallskipamount] \young(12,\bullet) \\[\medskipamount] \young(1\bullet,2) \\[\smallskipamount]\hline $s_1 \cdot h_1 \cdot 1^2$  \end{tabular} \hspace{1.5em}+\hspace{1.5em} \begin{tabular}{c}\young(11,2) \\[\smallskipamount] \young(12,2) \\[\smallskipamount] \young(111) \\[\smallskipamount] \young(112) \\[\smallskipamount] \young(122) \\[\smallskipamount] \young(222) \\[\smallskipamount] \hline $s_1 \cdot h_2 \cdot 0^2$ \end{tabular} \hspace{1.5em}-\hspace{1.5em} \begin{tabular}{c} \young(111) \\[\smallskipamount] \young(112) \\[\smallskipamount] \young(122) \\[\smallskipamount] \young(222) \\[\smallskipamount] \hline $s_3 \cdot h_0 \cdot 0^2$ \end{tabular}.
\end{center}
\end{example}

We have the evaluations of the Schur and homogeneous functions at $x_i=1$
\begin{align}
	s_\lambda ([1]^n) = \prod_{1 \leq i < j \leq n} \frac{\lambda_i-\lambda_j+j-i}{j-i} \text{ and }
	h_i ([1]^n) = \binom{n+i-1}{i}.
\end{align}


Dividing by $\mathrm{dim}(\mathfrak{sl}_{n}(\lambda))=s_{\lambda}([1]^n)$, using the formulas above, performing the obvious cancellations, and explicitly evaluating the sums, we obtain 


\begin{multline}\frac{1}{\mathrm{dim}(\mathfrak{sl}_{n,\lambda})}\sum_{\mu \in \mathfrak{sl}_{n,\lambda}} \mathrm{dim}(\mathfrak{sl}_{n,\lambda}(\mu)) \langle \overline{\mu},\overline{\mu}\rangle=\\
	=-\frac{m^2}{n}+n! \sum_{j=1}^n\left(\prod_{\substack{1 \leq i \leq n \\ i\neq j}} \frac{1}{\lambda_j-\lambda_i+i-j}\right) \sum_{i=0}^{\lambda_j-(j-1)} \binom{n+i-2}{i} \left(\lambda_j-(j-1)-i\right)^2\\
	=-\frac{m^2}{n}+\frac{n!}{n+1}\sum_{j=1}^n  \left(\prod_{\substack{1 \leq i \leq n \\ i\neq j}} \frac{1}{\lambda_j-\lambda_i+i-j}\right) \binom{n+\lambda_j-j}{\lambda_j-j}\left(n+2(\lambda_j-j)+1\right).\label{eq:rhs}
\end{multline}



On the other hand,
\begin{align}
\frac{1}{h+1}\langle \overline{\lambda}, \overline{\lambda+2\rho}\rangle &= \frac{1}{n+1}\left\langle \lambda - \frac{m}{n}[1]^n, \lambda- \frac{m}{n}[1]^n+2\rho - (n-1)[1]^n \right \rangle  \\
&=-\frac{m^2-mn+n^2}{n(n+1)}+\frac{1}{n+1}\left(\sum_{i=1}^n\lambda_i^2 +2(n-i)\lambda_i\right)\label{eq:lhs}
\end{align}

Setting~\Cref{eq:lhs,eq:rhs} equal, multiplying by $(n+1)$, pushing the constants to one side, and writing

\begin{align*}
	x_j = \lambda_j - j \text{  and  }
	P(x_j)= \left(n+2x_j+1\right) \prod_{i=1}^n (x_j+i),
\end{align*}
 we must check that

\begin{align}
	2\left(\sum_{1\leq i \leq j \leq n} x_ix_j\right) +  (1 + n)^2 \left(\sum_{i=1}^n x_i\right)+\frac{n (n + 1) (3n^2+5n+4)}{12}\label{eq:lhs2}\\
	= \sum_{j=1}^n  \left(\prod_{\substack{1 \leq i \leq n \\ i\neq j}} \frac{1}{x_j-x_i}\right) P(x_j).\label{eq:rhs2}
\end{align}

Treating the $x_i$ as formal variables, \Cref{eq:lhs2} is clearly a symmetric polynomial of degree $2$ in the $x_j$.  For any $1 \leq i < j \leq n$, the $i$th and $j$th terms of the sum in \Cref{eq:rhs2} are the only two terms with $x_i-x_j$ in the denominator---and the sum of these terms is multiplied by $P(x_j)-P(x_i)$.  This is true for any $i$ and $j$, and so all residues cancel.  We conclude that since $P(x_j)$ has degree $n+1$, \Cref{eq:rhs2} is also a symmetric polynomial in the $x_j$ of degree at most $2$.  It remains to confirm that these are the same degree $2$ symmetric function.


Setting $x_{i}=-i$ for $1 \leq i \leq n-2$, every term except the last two vanish in \Cref{eq:rhs2}.  These remaining terms simplify to
\begin{align*}
&\left(\frac{n+2x_n+1}{x_n-x_{n-1}} \frac{\prod_{i=1}^n x_n+i}{\prod_{i=1}^{n-2}x_n+i}\right)
+\left(
\frac{n+2x_{n-1}+1}{x_{n-1}-x_{n}} \frac{\prod_{i=1}^n x_{n-1}+i}{\prod_{i=1}^{n-2}x_{n-1}+i}\right)\\
&=
\frac{\left(n{+}2x_n{+}1\right)(x_n{+}n{-}1)(x_n{+}n)-\left(n{+}2x_{n-1}{+}1\right)(x_{n-1}{+}n-1)(x_{n-1}{+}n)}{x_n{-}x_{n-1}}\\
&=2\left(x_{n-1}^2+x_{n-2}^2\right)+2x_nx_{n-1}+(5n-1)(x_n+x_{n-1})+4n^2-n-1,
\end{align*}

which proves that the coefficients of $x_i^2$ and $x_ix_j$ in \Cref{eq:rhs2} agree with those in \Cref{eq:lhs2}:
\begin{align}	&\sum_{j=1}^n  \left(\prod_{\substack{1 \leq i \leq n \\ i\neq j}} \frac{1}{x_j-x_i}\right) P(x_j) =2\left(\sum_{1\leq i \leq j \leq n} x_ix_j\right) +  C_1 \left(\sum_{i=1}^n x_i\right)+C_0\label{eq:rhs3}
.\end{align}

We now determine $C_1$.  Setting $x_i=-i$ for $1 \leq i \leq n$ makes every term in \Cref{eq:rhs2} vanish; setting $x_i=-i+1$ leaves only the first term, which simplifies to $n(n+1)$.  Specializing $x_i$ to these values in \Cref{eq:rhs3} and subtracting, we obtain
\begin{align*}
	&\left(\frac{n(n+1)(n+2)(3n+1)}{12}-\frac{n(n+1)}{2}C_1+C_0\right)\\&-\left(\frac{(n-1)n(n+1)(3n-2)}{12}-\frac{(n-1)n}{2}C_1+C_0\right)=n^3+n^2-nC_1.
\end{align*}
Equating $n^3+n^2-nC_1 = 0-n(n+1)$, we obtain $C_1=(n+1)^2$.

Again setting $x_i=-i$ for $1 \leq i \leq n$ so that \Cref{eq:rhs2} is 0, we finally determine that $C_0=\frac{n (n + 1) (3n^2+5n+4)}{12}$ by computing
\[
C_0=2\left(\sum_{1\leq i \leq j \leq n} ij\right)-(n{+}1)^2 \left(\sum_{i=1}^n i\right)=\frac{n(n{+}1)(3n^2{+}5n{+}4)}{12}.
\qedhere
\]

\end{proof}

\section{Proof of \Cref{thm:main_thm} using Orthogonal Decompositions}

In this section, we give a conceptual proof of~\Cref{thm:main_thm} (in types other than $A$ and $C$) using the theory of orthogonal decompositions.   Our strategy is to compute the trace of the degree two Casimir element $\Omega$ in two different ways on the representation $\V_\lambda$.

\subsection{Orthogonal Decompositions of Lie algebras}
\label{sec:orthogonal}
The usual decomposition of $\gf$ using a fixed Cartan subalgebra $\hf$ and the adjoint representation is given in~\Cref{eq:decomp}.  Numerologically, this reflects the identity \[n(h+1)=n+nh=\mathrm{dim}(\hf)+|\Phi|.\]
But since $\mathrm{dim}(\hf)=n$ divides $\mathrm{dim}(\gf)=n(h+1)$, we might ask for a \emph{different} decomposition of $\gf$ using a direct sum of $h+1$ Cartan subalgebras:
\begin{equation}
\gf = \bigoplus_{i=0}^{h} \hf_i, \text{ with } \hf_i \text{ a Cartan subalgebra of }\gf \text{ and } \hf_0=\hf.
\end{equation}
In fact, such a decomposition is always possible.  More difficult is to require that these $h+1$ Cartan subalgebras are pairwise orthogonal with respect to the Killing form; such a decomposition is called an \defn{orthogonal decomposition}.  We refer the reader to~\cite{kostrikin1981orthogonal, kostrikin1994orthogonal} for background and references, pausing only to remark that such decompositions have a number of applications, including Thompson's construction of his sporadic simple group from the Lie algebra of type $E_8$~\cite{thompson1976conjugacy} and the construction of mutually unbiased bases for quantum cryptography~\cite{boykin2007mutually}.

We note that Kostant used a dual approach to the related numerological problem of trying to uniformly explain the duality between degrees and the heights of roots~\cite{kostant2009principal}.  Kostant decomposed $\gf$ into direct sum of $n$ irreducible representations of the principal three dimensional simple subalgebra (a distinguished copy of $\mathfrak{sl}_2$ inside $\gf$), reflecting the identity $n(h+1) = \sum_{i=1}^n (2d_i-1)$.

\begin{theorem}[{\cite{kostrikin1994orthogonal}}]
A complex simple Lie algebra $\gf$ has an orthogonal decomposition, except possibly if
\begin{itemize}
\item $\gf=\mathfrak{sl}_n$ for $n$ not a prime power; or if
\item $\gf=\mathfrak{sp}_{2n}$ for $n \neq 2^m$.
\end{itemize}
\label{thm:orthogonal}
\end{theorem}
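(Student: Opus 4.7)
The plan is a case-by-case construction: in each Cartan type, produce $h+1$ pairwise-orthogonal Cartan subalgebras as the joint eigenspaces of a suitable finite abelian group of inner automorphisms of $\gf$. Numerologically we are aiming for $h+1$ subalgebras each of dimension $n$, accounting for the full $n(h+1) = \dim(\gf)$, so the problem becomes: find a finite commutative subgroup of $\op{Aut}(\gf)$ whose character decomposition produces the required block structure.

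For type $A_{n-1} = \mathfrak{sl}_n$ with $n = p^k$ a prime power, I would use the finite Heisenberg construction. Identify $\mathbb{C}^n$ with complex-valued functions on $\mathbb{F}_n$, fix a nontrivial additive character $\psi \colon \mathbb{F}_n \to \mathbb{C}^{\times}$, and for $(a,b) \in \mathbb{F}_n^2$ define the translation $T_a f(x) = f(x+a)$ and modulation $M_b f(x) = \psi(bx)f(x)$. These satisfy $M_b T_a = \psi(ab)\, T_a M_b$, and the $n^2 - 1$ operators $\{T_a M_b : (a,b) \neq (0,0)\}$ form a traceless basis of $\mathfrak{sl}_n$. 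The $n+1$ lines through the origin in the $\mathbb{F}_n$-plane $\mathbb{F}_n^2$ partition the nonzero pairs into $n+1$ blocks of size $n-1$ consisting of pairwise commuting operators, and each block spans a Cartan subalgebra $\hf_\ell$. Killing-orthogonality of distinct blocks reduces to the identity $\mathrm{tr}(T_a M_b) = 0$ for $(a,b) \neq (0,0)$, since $\langle T_a M_b, T_{a'}M_{b'}\rangle$ is proportional, up to a character factor, to $\mathrm{tr}(T_{a+a'} M_{b+b'})$.

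For the remaining Cartan types $B_n$ and $D_n$, and the exceptional types $G_2, F_4, E_6, E_7, E_8$, I would invoke the explicit constructions compiled in~\cite{kostrikin1994orthogonal}: Thompson's $(\mathbb{Z}/2)^5 \rtimes \mathrm{SL}_5(\mathbb{F}_2)$-symmetric decomposition of $E_8$ (the same decomposition that underlies the construction of his sporadic simple group), the Kostrikin--Ufnarovski decompositions for $E_6, E_7, F_4, G_2$, and matrix-level constructions---essentially mutually unbiased bases on $\mathbb{C}^{2^m}$---for the classical types $B_n$ and $D_n$. A similar power-of-two Heisenberg construction on $\mathbb{C}^{2n}$ supplies the orthogonal decomposition in type $C_n$ when $n = 2^m$.

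The main obstacle---which is precisely the content of the two ``except possibly'' clauses---is that the finite-field Heisenberg machinery depends essentially on $\mathbb{F}_n$ being a field in type $A$, and on a parallel power-of-two condition in type $C$. When $n$ is composite but not a prime power (type $A$), or not a power of $2$ (type $C$), no partition of the Heisenberg basis into $h+1$ pairwise-commuting lines is known, and every proposed alternative has so far failed. Deciding whether such a decomposition exists for the excluded cases is precisely the Winnie-the-Pooh problem referenced in the title, and is the reason why the Casimir-trace argument alluded to after~\Cref{thm:casimir_action}, combined with~\Cref{thm:orthogonal}, will establish~\Cref{thm:main_thm} only outside types $A$ and $C$.
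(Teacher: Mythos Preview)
The paper does not prove this theorem at all: it is quoted verbatim from~\cite{kostrikin1994orthogonal} and used as a black box in the third proof of~\Cref{thm:main_thm}. So there is nothing to compare your argument against.

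That said, your sketch is a fair summary of the constructions in that reference---the finite Heisenberg group on $\mathbb{F}_{p^k}$ for $\mathfrak{sl}_{p^k}$, Thompson's decomposition for $E_8$, and so on---and it correctly identifies why the method stalls in types $A$ and $C$ outside the stated parameter ranges. As a proof it is incomplete (for instance, in the $B_n$ and $D_n$ cases you assert the existence of matrix-level constructions without giving them, and you do not verify that the Heisenberg blocks in type $A$ are genuinely Cartan subalgebras rather than merely abelian), but since the paper only needs the statement, a citation is the appropriate level of detail here.
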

Although types $A$ and $C$ are usually the easiest Lie algebras to work with, it is widely believed that these classical Lie algebras \emph{do not} have orthogonal decompositions (outside the cases listed above); this problem is wide open, even for $\mathfrak{sl}_6$.

The problem of finding such decompositions was dubbed the \defn{Winnie-the-Pooh problem} in the Russian paper~\cite{kostrikin1981orthogonal}, due to a play on words found in Zahoder's translation of Milne's famous children's book ``Winnie-the-Pooh'' into Russian.  Zahoder's play on words can be interpreted as the sequence of Cartan types $A_5$---corresponding to the smallest open case $\mathfrak{sl}_6$---then $A_6, A_7,$ and $A_8$.  This play on words apparently has no counterpart in Milne's original text, so when translating~\cite{kostrikin1981orthogonal} into English, Queen also translated Zahoder's verse---while managing to preserve the pun~\cite{kostrikin1994orthogonal}.

\begin{proof}[Third proof of~\Cref{thm:main_thm}, valid in types not $A$ or $C$]

Suppose $\gf$ has an orthogonal decomposition $\gf = \bigoplus_{i=0}^{h} \hf_i.$  For each $0\leq i\leq h$, pick an orthonormal basis $\{X_{i,1},\ldots,X_{i,n}\}$ of $\hf_i$.  Then \[\Big\{X_{i,j} : 0 \leq i \leq h\text{ and } 1 \leq j \leq n \Big\}\] is an orthonormal basis of $\gf$, so we may write the degree two Casimir element $\Omega$ as
\[\Omega = \sum_{\substack{0 \leq i \leq h\\ 1 \leq j \leq n }} X_{i,j}^2.\]
We compute the trace of $\Omega$ on $\V_\lambda$ in two different ways.  On the one hand, $\Omega$ acts as the scalar $\langle \lambda,\lambda+2\rho\rangle$ by~\Cref{thm:casimir_action}, so that\[\mathrm{tr}_{\V_\lambda} (\Omega) = \left\langle \lambda,\lambda+2\rho \right\rangle \mathrm{dim}(\V_\lambda).\]
On the other hand, for $0 \leq i \leq h$ define $\Omega_i = \sum_{j=0}^n X_{i,j}^2.$  By definition, $X_{0,j}$ acts as $\mu(X_{0,j})$ on the $\mu$-weight space of $\V_\lambda$, so
\begin{align*}
\mathrm{tr}_{\V_\lambda}(\Omega_0) = \sum_{\mu \in \V_\lambda} \mathrm{dim}(\V_\lambda(\mu)) \sum_{j=1}^n \mu(X_{0,j})^2 =\sum_{\mu \in \V_\lambda} \mathrm{dim}(\V_\lambda(\mu)) \langle \mu,\mu\rangle,
\end{align*}
since $\{X_{0,1},\ldots,X_{0,n}\}$ is an orthonormal basis of $\hf=\hf_0$.  Since every $\hf_i$ is conjugate to $\hf_0$ under an inner automorphism of $\gf$ that leaves the Killing form invariant, we have that $\Omega_i$ is conjugate to $\Omega_0$ for all $0\leq i \leq h$.  Therefore,
\begin{align*}\mathrm{tr}_{\V_\lambda} (\Omega) &= \mathrm{tr}_{\V_\lambda} \left(\sum_{i=0}^h \Omega_i\right) = \sum_{i=0}^h \mathrm{tr}_{\V_\lambda}\left(\Omega_i\right) \\ &= (h+1) \mathrm{tr}_{\V_\lambda}\left(\Omega_0\right) = (h+1)\sum_{\mu \in \V_\lambda} \mathrm{dim}(\V_\lambda(\mu)) \langle \mu,\mu\rangle.\end{align*}  The result now follows from equating the two expressions for $\mathrm{tr}_{\V_\lambda} (\Omega)$.
\end{proof}

By~\Cref{thm:orthogonal}, this proof of~\Cref{thm:main_thm} applies to all types except possibly if $\gf=\mathfrak{sl}_n$ for $n$ composite; or if $\gf=\mathfrak{sp}_{2n}$ for $n \neq 2^m$.

\section{Coxeter Cumulants}
\label{sec:coxeter_cumulants}

Let $[n]_q:=1+q+\cdots+q^{n-1}$ be the uniform distribution on $\{0,1,\ldots,n-1\}$.
\begin{lemma}
Fix $\{a_i\}_{i=1}^k$ and $\{b_i\}_{i=1}^k$ two sets of positive integers with $\prod_{i=1}^k \frac{[a_i]_q}{[b_i]_q}$ a polynomial in $q$, and let $X$ be a random variable with this distribution.  Then the $r$th cumulant of $X$ is \[\kappa_r(X)=\frac{B_r}{r}\sum_{i=1}^n \left(a_i^r-b_i^r\right),\] where $B_r$ is the $r$th Bernoulli number.  In particular, $\kappa_r(X)=0$ for odd $r>1$.
\label{prop:cumulant}
\end{lemma}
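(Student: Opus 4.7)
The plan is to exploit the multiplicativity of moment generating functions---equivalently, the additivity of cumulant generating functions---to reduce the problem to computing the cumulants of a single discrete uniform distribution.

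First, since $\prod_{i=1}^k [a_i]_q/[b_i]_q$ is a polynomial in $q$ with non-negative integer coefficients that sum to $\prod_i a_i/b_i$ at $q=1$, the probability generating function of $X$ is
\[
P_X(q) = \prod_{i=1}^k \frac{[a_i]_q / a_i}{[b_i]_q / b_i}.
\]
Substituting $q = e^t$ and taking $\log$ turns this product into a sum. Writing $U_n$ for the uniform random variable on $\{0,1,\ldots,n-1\}$, the identity $[n]_{e^t}/n = (e^{nt}-1)/(n(e^t-1))$ shows that $[n]_{e^t}/n$ is the moment generating function of $U_n$, so the cumulant generating function of $X$ splits as
\[
K_X(t) = \sum_{i=1}^k \bigl( K_{U_{a_i}}(t) - K_{U_{b_i}}(t)\bigr),
\]
which yields the additivity
\[
\kappa_r(X) = \sum_{i=1}^k \bigl(\kappa_r(U_{a_i}) - \kappa_r(U_{b_i})\bigr).
\]

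Next, I would compute $\kappa_r(U_n)$ explicitly. Writing $K_{U_n}(t) = g(nt) - g(t)$ with $g(t) := \log\bigl((e^t-1)/t\bigr)$, one has $\kappa_r(U_n) = r!\,[t^r]g(t) \cdot (n^r - 1)$. Differentiating and applying the defining generating function $t/(e^t-1) = \sum_{r \geq 0} B_r t^r/r!$ of the Bernoulli numbers yields
\[
g'(t) = \frac{1}{1-e^{-t}} - \frac{1}{t} = \frac{1}{2} + \sum_{r \geq 2} \frac{B_r}{r!} t^{r-1},
\]
so $r!\,[t^r]g(t) = B_r/r$ for every $r \geq 1$ (with the convention $B_1 = +1/2$, which is the one forced by matching the positive mean $(n-1)/2$ of $U_n$). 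Therefore $\kappa_r(U_n) = (B_r/r)(n^r - 1)$.

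Combining the two steps gives
\[
\kappa_r(X) = \frac{B_r}{r} \sum_{i=1}^k \bigl((a_i^r - 1) - (b_i^r - 1)\bigr) = \frac{B_r}{r}\sum_{i=1}^k (a_i^r - b_i^r),
\]
and the vanishing of $\kappa_r(X)$ for odd $r > 1$ follows from the classical fact that $B_r = 0$ for odd $r \geq 3$. The only real subtlety I anticipate is bookkeeping the sign convention for $B_1$: the standard expansion $t/(e^t-1) = \sum B_r t^r/r!$ produces $B_1 = -1/2$, but the formula of the lemma requires $B_1 = +1/2$ to match the mean of $U_n$. Once this is settled, everything reduces to routine formal power-series manipulation.
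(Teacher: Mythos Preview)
Your proposal is correct and follows essentially the same route as the paper: both arguments reduce to computing the cumulants of the discrete uniform $U_n$ by differentiating $\log\bigl((e^{nt}-1)/(e^t-1)\bigr)$ and matching against the Bernoulli generating function $t/(1-e^{-t})$, then invoke additivity of cumulants for the product $\prod_i [a_i]_q/[b_i]_q$. Your packaging via $K_{U_n}(t)=g(nt)-g(t)$ and your explicit flagging of the $B_1=+1/2$ convention are slightly tidier than the paper's presentation, but the mathematical content is identical.
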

\begin{proof}
We first claim that the $r$th cumulant of $[n]_q$ is \begin{equation}\kappa_r([n]_q)=\frac{B_r}{r}(n^r-1).\label{eq:ncum}\end{equation}  Let $X$ be a random variable whose distribution is $[n]_q$, and let \[u(t)=\mathrm{ln}\Expt{}{e^{tX}} = \sum_{r=0}^\infty \kappa_r(X) \frac{t^r}{r!}\] be its cumulant exponential generating function.  We claim that \begin{equation}u(t)=\mathrm{ln}(n)+\sum_{r=1}^\infty \frac{B_r}{r}(n^r-1)\frac{t^r}{r}, \label{eq:fcumulant}\end{equation} so that the coefficients of $\frac{t^r}{r!}$ are as desired.  To prove~\Cref{eq:fcumulant}, we write \[u(t)=\mathrm{ln}\left(\sum_{i=0}^{n-1} e^{it} \right)=\mathrm{ln}\left( \frac{1-e^{nt}}{1-e^t}\right).\]  Taking a derivative of~\Cref{eq:fcumulant} using the fact that $\sum_{r=0}^\infty B_r \frac{t^r}{r!}=\frac{t}{1-e^{-t}}$, we obtain \[\sum_{r=1}^\infty B_r(n^r-1)\frac{t^r-1}{r!} = \frac{1}{t}\left(\frac{nt}{1-e^{-nt}}-\frac{t}{1-e^{-t}}\right),\] which matches the derivative $u'(t)$.  The constant term is verified by computing $\lim_{t \to 0} u(t)$.

Suppose now $X$ is a random variable with distribution $\prod_{i=1}^k \frac{[a_i]_q}{[b_i]_q}$.  Using~\Cref{eq:ncum}, we compute \begin{align*}\kappa_r(X) &= \kappa_r\left(\prod_{i=1}^k \frac{[a_i]_q}{[b_i]_q}\right) =\sum_{i=1}^k \kappa_r([a_i]_q)-\kappa_r([b_i]_q) = \frac{B_r}{r}\sum_{i=1}^k \left(a_i^r-b_i^r\right). \qedhere\end{align*}
\end{proof}

The remainder of this section is devoted to corollaries of \Cref{prop:cumulant}.

\subsection{Coxeter numerology}

A finite irreducible Coxeter group $W$ of rank $n$ has roots $\Phi$, positive roots $\Phi^+$, degrees $d_1 < \cdots < d_n$, exponents $e_1<\cdots< e_n$, and a Coxeter number $h$.  These satisfy $e_i = d_i-1$, $h=d_n$, and \[\sum_{i=1}^n e_i=\frac{nh}{2}= |\Phi^+|.\]

Following~\cite{burns2012power,suter1998coxeter}, for $W$ a finite Weyl group (a crystallographic Coxeter group), define \[\gamma=\frac{\langle \widetilde{\alpha},\widetilde{\alpha} \rangle}{\langle \widetilde{\alpha}_s,\widetilde{\alpha}_s\rangle}g \chk{g},\] where $\widetilde{\alpha}$ is the highest root, $\widetilde{\alpha}_s$ is the highest short root, $g$ is the dual Coxeter number of $\Phi$, and $\chk{g}$ is the dual Coxeter number of the dual root system $\chk{\Phi}$.  For noncrystallographic types, $\gamma$ is defined in~\Cref{fig:gamma}.  With this definition, R.~Suter found the striking uniform formulas~\cite{suter1998coxeter} \begin{equation}\sum_{i=1}^n e_i^2 = \frac{n(h^2+\gamma-h)}{6} \text{ and } \sum_{i=1}^n e_i^3 = \frac{nh(\gamma-h)}{4}.\label{eq:sut}\end{equation}

\begin{figure}[htbp]
\[\begin{array}{|c|cccccccc|}\hline
\text{Type} & A_n & B_n/C_n & D_n & E_6 & E_7 & E_8 & F_4 & G_2 \\ 
\gamma & (n+1)^2 & 4n^2+2n-2 & (2n-2)^2 & 144 & 324 & 900 & 162 & 48 \\ \hline
\end{array}\]

\[\begin{array}{|c|ccc|}\hline
\text{Type} &H_3 & H_4 & I_2(m)\\
\gamma & 124 & 1116 & 2m^2-5m+6 \\ \hline
\end{array}\]
\caption{Values of $\gamma$ for finite Coxeter groups.  The definition of $\gamma$ is uniform for Weyl groups.}
\label{fig:gamma}
\end{figure}


\subsection{Inversions in Finite Coxeter Groups}
Let $(W,S)$ be a finite irreducible Coxeter system.  The \defn{length} of the shortest word in simple reflections for an element $w \in W$ is written $\ell(w)$.  It is well-known that the generating function for length is
\[\mathrm{Inv}(W;q):=\sum_{w\in W} q^{\ell(w)}=\prod_{i=1}^n [d_i]_q.\]

Applying \Cref{prop:cumulant} gives the following  generalization of~\cite[Theorem 3.1]{kahle2018counting}.

\begin{corollary}
Let $W$ be a finite irreducible Coxeter group, and let $X$ be a random variable with distribution $\mathrm{Inv}(W;q)$.  Then \[\kappa_r(X) = \frac{B_r}{r} \sum_{i=1}^n \left(d_i^r-1\right).\]  In particular, the expectation and variance are \[\mathbb{E}(X)=\frac{|\Phi^+|}{2} \text{ and } \mathbb{V}(X)=\frac{n(\gamma+5h+h^2)}{72}.\]
\end{corollary}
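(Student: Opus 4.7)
The plan is to apply \Cref{prop:cumulant} directly, since $\mathrm{Inv}(W;q)=\prod_{i=1}^n [d_i]_q$ is literally of the form the lemma handles: take $a_i=d_i$ and $b_i=1$ (so that $[b_i]_q=1$ contributes nothing to the product but makes the bookkeeping work). The lemma then immediately gives
\[
\kappa_r(X)=\frac{B_r}{r}\sum_{i=1}^n\bigl(d_i^r-1\bigr),
\]
which is the main formula; the vanishing for odd $r>1$ is automatic because $B_r=0$ for odd $r\geq 3$.

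Next I would specialize to $r=1$ and $r=2$. For $r=1$, use $B_1=\tfrac12$ (the convention fixed by the generating function $\sum B_r t^r/r!=t/(1-e^{-t})$ used in the proof of \Cref{prop:cumulant}), together with $d_i=e_i+1$ and the standard identity $\sum_{i=1}^n e_i=|\Phi^+|=nh/2$, to obtain
\[
\mathbb{E}(X)=\kappa_1(X)=\tfrac12\sum_{i=1}^n(d_i-1)=\tfrac12\sum_{i=1}^n e_i=\tfrac{|\Phi^+|}{2}.
\]

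For $r=2$, use $B_2=\tfrac16$, expand $d_i^2-1=(e_i+1)^2-1=e_i^2+2e_i$, and substitute Suter's uniform formula $\sum e_i^2=n(h^2+\gamma-h)/6$ from \Cref{eq:sut}. The terms collapse as
\[
\sum_{i=1}^n(d_i^2-1)=\sum_{i=1}^n e_i^2+2\sum_{i=1}^n e_i=\frac{n(h^2+\gamma-h)}{6}+nh=\frac{n(h^2+5h+\gamma)}{6},
\]
so that $\mathbb{V}(X)=\kappa_2(X)=\tfrac{1}{12}\cdot\tfrac{n(h^2+5h+\gamma)}{6}=\tfrac{n(\gamma+5h+h^2)}{72}$.

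The only nontrivial ingredient is Suter's identity; everything else is bookkeeping with Bernoulli numbers and the relation $d_i=e_i+1$. The main obstacle, then, is not the computation but the conceptual one: recognizing that the reason $\gamma$ enters the variance uniformly across all Coxeter types is precisely Suter's observation that the power sums $\sum e_i^2$ and $\sum e_i^3$ admit closed forms in terms of $h$ and $\gamma$. Since only the power sum $\sum(d_i^r-1)$ is needed, any further moments would similarly reduce, via binomial expansion, to combinations of Suter-type power-sum identities in the exponents.
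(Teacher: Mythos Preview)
Your proposal is correct and follows essentially the same approach as the paper: apply \Cref{prop:cumulant} with $a_i=d_i$, $b_i=1$, and then invoke Suter's identity \Cref{eq:sut} for the variance. The only minor difference is that the paper obtains the expectation from the symmetry $w\mapsto ww_\circ$ (which makes the length distribution palindromic about $|\Phi^+|/2$), whereas you compute it directly from $\kappa_1$; both are immediate.
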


\begin{proof}
The formula for expectation is immediate from the symmetry $w \mapsto w w_\circ$, where $w_\circ$ is the longest element of $W$.  The formula for variance follows from \Cref{eq:sut}.
\end{proof}

\subsection{Rational Catalan Numbers}
Let $W$ be a finite well-generated irreducible complex reflection group with exponents $e_1,\ldots,e_n$ and $p$ coprime to to the Coxeter number $h=d_n$.  The \defn{rational Catalan number} \[\mathrm{Cat}^{[p]}(W;q):=\prod_{i=1}^n \frac{[p+(pe_i \mod h)]_q}{[d_i]_q}\] is the graded character of the representation $eL_{p/h}(\mathsf{triv})$ of the rational Cherednik algebra corresponding to $W$~\cite{gordon2012catalan,stump2015cataland}.  When $W$ is a Coxeter group, multiplication by $p$ simply permutes the exponents modulo $h$.  Applying \Cref{prop:cumulant} gives the following.

\begin{corollary}
Let $W$ be a finite Coxeter group, and let $X$ be a random variable with distribution $\mathrm{Cat}^{[p]}(W;q)$.  Then \[\mathbb{E}(X)=\frac{n(p-1)}{2}, \hspace{2em} \mathbb{V}(X) = \frac{n(p-1)(p+h+1)}{12},\] and for $W$ a crystallographic Coxeter group $\kappa_4(X)=-\frac{n(p-1)(h+p+1)(p^2+ph+\gamma+1)}{120}$.
\label{prop:rat_cat}
\end{corollary}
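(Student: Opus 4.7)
The plan is to apply \Cref{prop:cumulant} directly to the product formula for $\mathrm{Cat}^{[p]}(W;q)$, reading off the parameters $a_i = p + (pe_i \bmod h)$ in the numerator and $b_i = d_i = e_i + 1$ in the denominator. The crucial combinatorial input, already noted in the paragraph preceding the corollary, is that for $W$ a Coxeter group and $\gcd(p,h) = 1$, multiplication by $p$ modulo $h$ permutes the multiset of exponents $\{e_1, \ldots, e_n\}$. So after summing, we may replace $pe_i \bmod h$ by $e_i$, which gives
\[
\kappa_r(X) = \frac{B_r}{r} \sum_{i=1}^n \bigl[(p+e_i)^r - (1+e_i)^r\bigr].
\]

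First, for $r=1$, every $e_i$ cancels and we get $\kappa_1(X) = \tfrac{1}{2} \sum_{i=1}^n (p-1) = \tfrac{n(p-1)}{2}$, giving the expectation. Next, for $r=2$, I would factor $(p+e_i)^2 - (1+e_i)^2 = (p-1)(p+1+2e_i)$, sum using $\sum e_i = nh/2$, and multiply by $B_2/2 = 1/12$ to obtain $\mathbb{V}(X) = \tfrac{n(p-1)(p+h+1)}{12}$.

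For the fourth cumulant (only claimed in the crystallographic case, because that is where $\gamma$ is uniformly defined and \Cref{eq:sut} applies), I would expand $(p+e_i)^4 - (1+e_i)^4$ in powers of $e_i$ as
\[
(p^4-1) + 4(p^3-1)e_i + 6(p^2-1)e_i^2 + 4(p-1)e_i^3,
\]
sum termwise using $\sum e_i = nh/2$ together with R.~Suter's formulas $\sum e_i^2 = n(h^2+\gamma-h)/6$ and $\sum e_i^3 = nh(\gamma-h)/4$ from \Cref{eq:sut}, and factor out $n(p-1)$. The remaining cubic polynomial in $p$ should factor as $(p+h+1)(p^2+ph+\gamma+1)$; checking this factorization by matching coefficients of $p^3, p^2, p$, and the constant term is the one nontrivial verification. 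Multiplying by $B_4/4 = -1/120$ then yields the claimed expression for $\kappa_4(X)$.

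The expectation and variance are a quick unwinding; the main bookkeeping obstacle is only the fourth-cumulant expansion, where several terms must collapse neatly. The cleanest way to confirm the factorization is to expand $(p+h+1)(p^2+ph+\gamma+1)$ directly and compare with the coefficient-wise computation, as no clever algebraic identity is needed beyond Suter's formulas.
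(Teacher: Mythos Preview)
Your proposal is correct and follows exactly the approach the paper has in mind: the paper's proof reads in its entirety ``The formulas follow from simple computations and \Cref{eq:sut},'' and you have carried out precisely those computations, applying \Cref{prop:cumulant} with $a_i=p+e_i$, $b_i=1+e_i$ (after using the permutation of exponents) and then invoking $\sum e_i = nh/2$ together with Suter's identities for $\sum e_i^2$ and $\sum e_i^3$.
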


\begin{proof}
The formulas follow from simple computations and \Cref{eq:sut}.
\end{proof}

Interestingly, the variance in~\Cref{prop:rat_cat} is exactly the expectation computed in~\cite{thiel2017strange} for simply-laced crystallographic Coxeter groups.

\subsection{Minuscule Posets}
Recall that a dominant weight $\lambda$ is called \defn{minuscule} if the weights in its highest weight representation $V_\lambda$ coincides with weights in its Weyl group orbit $\{w \lambda : w \in W\}$.  A \defn{minuscule poset} may then be defined as the order filter of positive roots generated by simple root whose corresponding fundamental weight is minuscule.

A \defn{plane partition of height at most $k$} in a poset $P$ is an order-preserving map $P \to \{0,1,\ldots,k\}$.  It is well-known that the generating function for the number of boxes in plane partitions of height at most $k$ in a minuscule poset has the product formula~\cite{proctor1984bruhat} \[\mathrm{PP}^{[p]}(P;q):=\prod_{p\in P} \frac{[k+\mathrm{ht}(p)]_q}{[\mathrm{ht}(p)]_q}.\]

\begin{corollary}
Let $P$ be a minuscule poset, and let $X$ be a random variable with distribution $\mathrm{PP}^{[p]}(P;q)$.  Then \[\mathbb{E}(X)=\frac{k}{2}|P| \text{ and } \mathbb{V}(X) = \frac{k(k+h)}{12}|P|.\]
\label{prop:min_pp}
\end{corollary}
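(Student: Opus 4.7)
The plan is to apply \Cref{prop:cumulant} directly. The generating function $\mathrm{PP}^{[p]}(P;q)$ is already presented as a ratio of $q$-integers indexed by the elements of $P$, with $a_p = k + \mathrm{ht}(p)$ in the numerator and $b_p = \mathrm{ht}(p)$ in the denominator. So \Cref{prop:cumulant} immediately yields
\[\kappa_r(X) = \frac{B_r}{r} \sum_{p \in P} \bigl((k+\mathrm{ht}(p))^r - \mathrm{ht}(p)^r\bigr).\]
All the work is then to specialize this to $r=1$ and $r=2$.

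For $r=1$, using $B_1 = \tfrac{1}{2}$ (the convention fixed implicitly in the proof of \Cref{prop:cumulant} by comparing with the mean of $[n]_q$), the telescoping $(k+\mathrm{ht}(p)) - \mathrm{ht}(p) = k$ gives $\mathbb{E}(X) = \tfrac{k}{2}|P|$ with no additional input. For $r=2$, using $B_2 = \tfrac{1}{6}$, the expansion $(k+\mathrm{ht}(p))^2 - \mathrm{ht}(p)^2 = k^2 + 2k\,\mathrm{ht}(p)$ gives
\[\mathbb{V}(X) \;=\; \frac{k^2|P|}{12} + \frac{k}{6}\sum_{p \in P}\mathrm{ht}(p).\]
Matching this against the claimed $\tfrac{k(k+h)}{12}|P|$ reduces the corollary to the single identity $\sum_{p \in P}\mathrm{ht}(p) = \tfrac{h}{2}|P|$.

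The main (and really only) step is to verify this average-height identity for minuscule posets. My plan is to invoke the classical fact that every minuscule poset $P$ is self-dual: the order-reversing involution $p \mapsto p^*$ sends an element of height $j$ to an element of height $h - j$ (equivalently, the heights of positive roots appearing in a minuscule representation are symmetric around $h/2$). Pairing each $p$ with $p^*$ gives $\mathrm{ht}(p) + \mathrm{ht}(p^*) = h$, and summing over $P$ yields $2\sum_p \mathrm{ht}(p) = h|P|$, as required. The self-duality itself can be taken from Proctor's classification of minuscule posets, or derived from the observation that the longest element of the Weyl group acts on the weights of the minuscule representation $V_{\omega_i}$ as $\mu \mapsto -w_\circ(\mu)$, which in terms of the poset of positive roots in the support of $\omega_i$ is exactly the required order-reversal sending height $j$ to height $h-j$.

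Plugging $\sum_p \mathrm{ht}(p) = \tfrac{h|P|}{2}$ back into the variance expression collapses it to $\tfrac{k(k+h)}{12}|P|$, completing the proof. I do not expect any computational obstacle beyond the height-symmetry of minuscule posets, which is well documented in the literature and could be cited rather than reproved.
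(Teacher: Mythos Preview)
Your proof is correct and follows essentially the same approach as the paper: apply \Cref{prop:cumulant}, expand the $r=1,2$ cases, and reduce the variance computation to the identity $\sum_{p\in P}\mathrm{ht}(p)=\tfrac{h}{2}|P|$, which the paper justifies in one phrase (``using the symmetry of $P$'') while you spell out the self-duality argument more explicitly.
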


\begin{proof}
We have that \[\kappa_r(X)=\frac{B_r}{r}\sum_{p \in P} \Big( (k+\mathrm{ht}(p))^r-\mathrm{ht}(p)^r\Big).\]
Expectation is immediate.  For variance, we compute
\begin{align*}
  \frac{1}{12}\sum_{p \in P} \Big( (k+\mathrm{ht}(p))^2-\mathrm{ht}(p)^2 \Big) &= \frac{1}{12}\sum_{p \in P} \left( k^2 + 2k \mathrm{ht}(p) \right)\\
  &=\frac{1}{12}\left(|P|k^2+2k \sum_{p \in P} \mathrm{ht(p)} \right)\\
  &=\frac{1}{12}\left(|P|k^2+|P|hk\right)=\frac{k(k+h)}{12}|P|,
\end{align*}
where $\sum_{p \in P} \mathrm{ht(p)}=\frac{h}{2}|P|$ using the symmetry of $P$.
\end{proof}

Specializing to type $A$ gives the following.

\begin{corollary}
The variance for the number of boxes in plane partitions fitting inside an $a \times b \times c$ box is $\frac{1}{12}abc(a+b+c).$
\end{corollary}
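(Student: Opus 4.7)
The plan is to specialize Proposition~\ref{prop:min_pp} to the type $A$ minuscule poset, since the variance formula $\frac{k(k+h)}{12}|P|$ has already been established in full generality for every minuscule poset. All the remaining work is identifying the correct poset, Coxeter number, and height parameter, so this is essentially a bookkeeping exercise rather than a new computation.

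First, I would recall the standard identification: plane partitions fitting inside an $a \times b \times c$ box are in bijection with order-preserving maps $[a] \times [b] \to \{0,1,\ldots,c\}$, where a plane partition is recovered from its column-height function on the rectangular base. Under this bijection, the number of boxes of the plane partition equals the sum of the values of the order-preserving map, so the relevant graded generating function is exactly $\mathrm{PP}^{[c]}([a]\times[b];q)$.

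Next, I would verify that $[a]\times[b]$ really is a minuscule poset: in $\mathfrak{sl}_{a+b}$, every fundamental weight $\omega_i$ is minuscule, and the order filter of positive roots generated by the simple root $\alpha_a$ is well known to be isomorphic to $[a]\times[b]$. This gives $|P| = ab$, while the Coxeter number of $\mathfrak{sl}_{a+b}$ is $h = a+b$.

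Finally, setting $k = c$ in Proposition~\ref{prop:min_pp} yields
\[\mathbb{V}(X) \;=\; \frac{k(k+h)}{12}|P| \;=\; \frac{c(c+a+b)}{12}\cdot ab \;=\; \frac{1}{12}abc(a+b+c),\]
as desired. There is no real obstacle here, since all the substantive work—the cumulant computation for products of $q$-integers, the structure of minuscule posets, and the evaluation $\sum_{p\in P}\mathrm{ht}(p) = \tfrac{h}{2}|P|$—has been carried out in earlier results; the only mild care needed is to match conventions so that ``height at most $c$'' in the poset corresponds to the $c$ in the $a\times b\times c$ box.
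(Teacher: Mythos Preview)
Your proposal is correct and follows exactly the paper's approach: the paper simply writes ``Specializing to type $A$ gives the following,'' and you have spelled out precisely that specialization, identifying $P=[a]\times[b]$ as the minuscule poset in $\mathfrak{sl}_{a+b}$ with $|P|=ab$, $h=a+b$, and $k=c$, then substituting into Corollary~\ref{prop:min_pp}.
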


\medskip

A \defn{linear extension} of a poset $P$ is an order-preserving bijection from $P$ to $\{1,2,\ldots,|P|\}$. Relative to a fixed linear extension $\ell$ of a poset, the \defn{major index} of a second linear extension $\ell'$ is the sum of the positions of the descents of $\ell'$---that is, the sum $\sum i,$ where the sum is over all $i$ for which $\ell'\left(\ell^{-1}(i)\right)>\ell'\left(\ell^{-1}(i+1)\right)$.  Recall that the generating function for major index of linear extensions of a minuscule poset is given by \[\mathrm{SYT}(P;q):=\frac{[|P|]!_q}{\prod_{p \in P} [\mathrm{ht}(p)]_q}.\]

\begin{corollary}
Let $P$ be a minuscule poset, and let $X$ be a random variable with distribution $\mathrm{SYT}(P;q)$.  Then $\mathbb{E}(X)=\frac{|P|(|P|+1-h)}{4}$.
\label{prop:min_le}
\end{corollary}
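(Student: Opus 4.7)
The plan is to apply \Cref{prop:cumulant} directly to the product formula for $\mathrm{SYT}(P;q)$. Writing
\[\mathrm{SYT}(P;q) = \prod_{i=1}^{|P|}[i]_q \Big/ \prod_{p \in P}[\mathrm{ht}(p)]_q,\]
we are in exactly the setting of \Cref{prop:cumulant}, with the multiset $\{a_i\} = \{1,2,\ldots,|P|\}$ in the numerator and $\{b_i\} = \{\mathrm{ht}(p) : p \in P\}$ in the denominator. (Note $|\{a_i\}| = |\{b_i\}| = |P|$, as required.) Hence for every $r \geq 1$,
\[\kappa_r(X) = \frac{B_r}{r}\left(\sum_{i=1}^{|P|} i^r - \sum_{p \in P} \mathrm{ht}(p)^r\right).\]

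To extract the expectation I take $r=1$, where $B_1 = 1/2$ under the convention $\sum_{r\geq 0} B_r t^r/r! = t/(1-e^{-t})$ fixed in the proof of \Cref{prop:cumulant}. This gives
\[\mathbb{E}(X) = \frac{1}{2}\left(\frac{|P|(|P|+1)}{2} - \sum_{p \in P} \mathrm{ht}(p)\right).\]
The remaining input is the evaluation $\sum_{p \in P} \mathrm{ht}(p) = \frac{h}{2}|P|$, which follows from the self-duality of minuscule posets already invoked in the proof of \Cref{prop:min_pp}: pairing each $p \in P$ with its image under the order-reversing involution contributes $h$ to the sum of heights. Substituting in yields $\mathbb{E}(X) = \frac{|P|(|P|+1-h)}{4}$, as claimed.

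There is essentially no obstacle here: everything reduces to recognizing that $\mathrm{SYT}(P;q)$ fits the hypotheses of \Cref{prop:cumulant} and then invoking the same symmetry of $P$ used in the preceding corollary. The only point to be slightly careful about is the sign/convention for $B_1$, so I would state the convention explicitly (matching the one fixed in the proof of \Cref{prop:cumulant}) to avoid any ambiguity.
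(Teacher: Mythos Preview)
Your proof is correct and follows exactly the paper's approach: the paper's proof simply says ``Compute using $\kappa_r(X)=\frac{B_r}{r}\left(\sum_{i=1}^{|P|} i^r-\sum_{p \in P}\mathrm{ht}(p)^r\right)$,'' and you have filled in the details of that computation, including the invocation of $\sum_{p\in P}\mathrm{ht}(p)=\frac{h}{2}|P|$ from the proof of \Cref{prop:min_pp}.
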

\begin{proof}
Compute using \[\kappa_r(X)=\frac{B_r}{r}\left(\sum_{i=1}^{|P|} i^r-\sum_{p \in P}\mathrm{ht}(p)^r\right).\]
\end{proof}

Specializing to type $A$ gives the following.

\begin{corollary}
The expected value for major index of standard Young tableaux of rectangular shape is given by $\frac{a(a-1)b(b-1)}{4}$.
\end{corollary}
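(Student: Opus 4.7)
The plan is to recognize this corollary as a direct specialization of Proposition \ref{prop:min_le} to type $A$. Standard Young tableaux of rectangular shape $a \times b$ correspond bijectively to linear extensions of the poset $P$ that is the product of chains of lengths $a$ and $b$, with the major index statistic corresponding (under this identification) to the major-index statistic on linear extensions used in Proposition \ref{prop:min_le}. So it suffices to identify $P$ as a minuscule poset and plug its numerology into the formula $\mathbb{E}(X)=\frac{|P|(|P|+1-h)}{4}$.

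First I would recall that in type $A_{n-1}$, every fundamental weight $\omega_k$ is minuscule, and the corresponding minuscule poset is the $k \times (n-k)$ rectangle. Taking $k = a$ and $n = a+b$ realizes the $a \times b$ rectangular poset as a minuscule poset, with $|P| = ab$ and Coxeter number $h = a+b$ (the Coxeter number of $A_{a+b-1}$). The distribution $\mathrm{SYT}(P;q)$ for this $P$ is, by the product formula quoted just before Proposition \ref{prop:min_le}, the standard $q$-analogue of the hook-content formula for rectangles, and it is the generating function for major index on $a \times b$ SYT.

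The final step is a routine substitution: Proposition \ref{prop:min_le} gives
\[
\mathbb{E}(X) \;=\; \frac{|P|(|P|+1-h)}{4} \;=\; \frac{ab(ab+1-a-b)}{4} \;=\; \frac{ab(a-1)(b-1)}{4},
\]
where the last equality uses the factorization $ab - a - b + 1 = (a-1)(b-1)$. This establishes the claimed formula. There is no real obstacle here; the only thing to be careful about is matching conventions so that the ``major index'' on SYT of rectangular shape indeed agrees with the major index of linear extensions of the minuscule poset relative to the standard row-reading linear extension, which is standard and implicit in the formula for $\mathrm{SYT}(P;q)$ used in the previous corollary.
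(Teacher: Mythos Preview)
Your proposal is correct and matches the paper's approach exactly: the paper simply states ``Specializing to type $A$ gives the following'' after Corollary~\ref{prop:min_le}, and your identification of the $a\times b$ rectangle as a type $A_{a+b-1}$ minuscule poset with $|P|=ab$ and $h=a+b$, followed by the factorization $ab+1-a-b=(a-1)(b-1)$, is precisely that specialization. (Minor note: the result you cite as ``Proposition~\ref{prop:min_le}'' is stated in the paper as a Corollary.)
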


\subsection{Descending Plane Partitions}

The generating function for descending plane partitions by number of boxes is given by \[\mathrm{DPP}(q)=\prod_{i=0}^{n-1} \frac{[3i+1]!_q}{[n+i]!_q}.\]

\begin{corollary}
Let $X$ be a random variable with distribution $\mathrm{DPP}(q)$.  Then \[\mathbb{E}(X)= \frac{1}{6} n(n^2-1) \text{ and } \mathbb{V}(X)= \frac{1}{12} n^2(n^2-1).\]
\label{prop:dpp}
\end{corollary}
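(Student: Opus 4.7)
The plan is to apply \Cref{prop:cumulant} directly to the generating function $\mathrm{DPP}(q)$. First, rewrite each $q$-factorial as $[k]!_q = \prod_{j=1}^{k}[j]_q$; this puts $\mathrm{DPP}(q)$ into the form $\prod_i [a_i]_q/[b_i]_q$ required by the lemma, with the multisets of numerator and denominator indices being $\bigcup_{i=0}^{n-1}\{1,2,\ldots,3i+1\}$ and $\bigcup_{i=0}^{n-1}\{1,2,\ldots,n+i\}$, respectively. Plugging into \Cref{prop:cumulant}, the $r$th cumulant of $X$ becomes
\[
\kappa_r(X) = \frac{B_r}{r}\sum_{i=0}^{n-1}\bigl(\sigma_r(3i+1) - \sigma_r(n+i)\bigr),
\]
where $\sigma_r(k) := \sum_{j=1}^{k} j^r$.

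Next, I would specialize $r = 1$ using $B_1 = \tfrac{1}{2}$ and $\sigma_1(k) = k(k+1)/2$. The summand is then a polynomial of degree $2$ in $i$, so summing it over $i \in \{0, 1, \ldots, n-1\}$ by the standard power-sum formulas gives a polynomial in $n$ that should simplify to $n(n^2-1)/6$. Similarly, for $r = 2$, I would use $B_2 = \tfrac{1}{6}$ and $\sigma_2(k) = k(k+1)(2k+1)/6$; the summand is now cubic in $i$, and the same summation procedure should yield $n^2(n^2-1)/12$.

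The main (and essentially only) obstacle is the algebraic bookkeeping, which is precisely what \Cref{prop:cumulant} was designed to streamline: there is no conceptual difficulty beyond expressing $\mathrm{DPP}(q)$ in the product form to which the lemma applies. As a sanity check at $n = 2$, one finds $\mathrm{DPP}(q) = [4]_q/[2]_q = 1 + q^2$, a distribution with mean $1$ and variance $1$, in agreement with both formulas.
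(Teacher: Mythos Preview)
Your proposal is correct and follows exactly the approach the paper intends: the corollary is stated without proof in the paper, as an immediate consequence of \Cref{prop:cumulant}, and your computation spells out that application in detail, including a correct sanity check at $n=2$.
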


\section{Open problems}
\begin{itemize}
\item The problem of determining uniform formulas for higher moments for the expected norm of a weight in a highest weight representation is open.
\item By the Harish-Chandra isomorphism, $\gf$ has $n$ Casimir elements.  Since the Casimir elements live in the center of $U(\gf)$, they generalize the degree two Casimir by acting as a scalar on any highest weight representation $\V(\lambda)$.  It might be interesting to write down explicit formulas.\footnote{After our presentation of this work at FPSAC 2018, we understand that Richard Stanley has made some progress on this problem in type $A$.}  \item The expectations arising from evaluating other natural $W$-symmetric functions besides the norm on the weights of $\V_\lambda$ could be worth looking at.  \item Since Schubert polynomials generalize Schur polynomials---which are equivalent to the Weyl character formula in type $A$---one could ask for expectation of polynomial functions of the exponents in a Schubert polynomial.
\item For $P$ a minuscule poset, it would be interesting to find uniform expressions for $\sum_{p \in P} \mathrm{ht}(p)^r$.
\end{itemize}

\section*{Acknowledgments}
The second author warmly thanks Dennis Stanton for precious help with~\Cref{sec:typea} and Paul Garrett for explaining to him where the Casimir element lives.  An extended abstract of this work was presented at FPSAC 2018~\cite{thielwinnie}.

\bibliographystyle{amsalpha}
\bibliography{winnie_the_pooh}

\end{document}